\documentclass[11pt, letterpaper]{article}
\usepackage{amsmath, amsfonts, amssymb, amsthm}
\usepackage{mathtools}
\usepackage[margin=1in]{geometry}
\usepackage{setspace}
\usepackage{authblk}
\usepackage{cite}
\usepackage[utf8]{inputenc}
\usepackage[T1]{fontenc}
\usepackage{lmodern}
\usepackage{lineno}
\usepackage{enumitem}
\usepackage[colorlinks=true, allcolors=blue]{hyperref}
\usepackage{xcolor}
\usepackage{authblk}
\newtheorem{theorem}{Theorem}[section]

\newtheorem{lemma}[theorem]{Lemma}

\newtheorem{proposition}[theorem]{Proposition}

\numberwithin{equation}{section}
\newtheorem{maintheorem}{Theorem}

\usepackage{authblk}

\author[1]{Laith Hawawsheh}
\author[2]{Ahmad Al-Salman\thanks{Corresponding author. Email: \texttt{alsalman@yu.edu.jo}}}
\author[3]{Yibiao Pan}

\affil[1]{\small School of Computing, German Jordanian University, Amman, Jordan\\
Email: \texttt{Laith.hawawsheh@gju.edu.jo}}

\affil[2]{\small Department of Mathematics, Yarmouk University, Irbid, Jordan\\
Email: \texttt{alsalman@yu.edu.jo}}

\affil[3]{\small Department of Mathematics, University of Pittsburgh, USA\\
Email: \texttt{yibiao@pitt.edu}}

\allowdisplaybreaks
\begin{document}

\title{On Block Spaces on the Unit Sphere}
\date{}
\maketitle
\begin{abstract}
In this paper, we establish several properties of the block spaces $B_q^{0,v}$. Our primary contribution resolves an open question regarding the relationship between $B_q^{0,v}$ and the Zygmund-type space $L(\log L)^{1+v}$ by demonstrating that $B_q^{0,v}$ is a proper subspace of $L(\log L)^{1+v}$. Furthermore, we prove that the union of all block spaces $B_q^{0,v}$ for $1 < q \le \infty$ is strictly contained within $L(\log L)^{1+v}$.
\end{abstract}

\section{Introduction}

For $n\geq 2$, let $\mathbb{S}^{n-1}$ be the unit sphere in the $n$
-dimensional Euclidean space $\mathbb{R}^{n}$, and let $d\sigma $ be the
induced Lebesgue measure on $\mathbb{S}^{n-1}$. In their study\ of the
convergence of the Fourier series, M. H. Taibleson and G. Weiss \cite{TW}
introduced the method of block decomposition for functions. Subsequently,
several applications of the block decomposition have been investigated. For
background information on block spaces, their applications, and their
theory, we refer the reader to \cite{ALQALS-ASIAN}--\cite{TW}, and
references therein. For $1<q<\infty $, the block spaces $B_{q}^{0,\nu}$ on $
\mathbb{S}^{n-1}$ are defined by 
\begin{equation*}
B_{q}^{0,\nu}(\mathbb{S}^{n-1})=\left\{ \Omega \in L^{1}(\mathbb{S}
^{n-1}):\Omega (\theta )=\sum_{j=1}^{\infty }\mu _{j}a_{j}(\theta
),\,A_{q}^{0,\nu}(\{\mu _{j}\})<\infty \right\} ,
\end{equation*}
where each $\mu _{j}$ is a complex number, each $a_{j}$ is a $q$-block
supported in a cap $M_{j}$ on $\mathbb{S}^{n-1}$ with $|M_{j}|=\sigma
(M_{j}) $, 
\begin{equation*}
\left\Vert a_{j}\right\Vert _{q}\leq |M_{j}|^{-\frac{1}{q^{\prime }}}
\end{equation*}
and 
\begin{equation*}
A_{q}^{0,\nu}(\{\mu _{j}\})=\sum_{j=1}^{\infty }|\mu _{j}|\left\{ 1+\left(
\log ^{+}\frac{1}{|M_{j}|}\right) ^{1+\nu }\right\}\coloneqq\sum_{j=1}^{\infty }|\mu _{j}|\,b_{j}.
\end{equation*}
Here $\frac{1}{q}+\frac{1}{q^{\prime }}=1$.

It is shown in \cite{LTW} and \cite{KS} that for $1<q\leq \infty $, we have 
\begin{equation*}
B_{q_{2}}^{0,0}\subseteq B_{q_{1}}^{0,0}\mbox{
\quad (}1<q_{1}<q_{2})
\end{equation*}
and 
\begin{equation*}
L^{q}\left( \mathbb{S}^{n-1}\right) \subseteq B_{q}^{0,0}\left( \mathbb{S}
^{n-1}\right) .
\end{equation*}

A natural question that often arises in the context of singular integral
operators, and in particular Marcinkiewicz integral operators, concerns the
relationship between the block space $B_{q}^{0,0}\left( \mathbb{S}
^{n-1}\right) $ and the Hardy space $H^{1}(\mathbb{S}^{n-1})$ in the sense
of Coifman and Weiss \cite{coifman}. On the other hand, Calder\'{o}n and
Zygmund showed in the their fundamental papers \cite{CZ1,CZ2} that the
classical Calder\'{o}n and Zygmund singular operator 
\begin{equation*}
T_{\Omega }f(x)=\mathrm{p.v.}\,\int\limits_{\mathbb{R}^{n}}f(x-y)\,\frac{
\Omega (y)}{|y|^{\,n}}dy.
\end{equation*}
is bounded on $L^{p}$ for $1<p<\infty $ under the condition that the kernel $
\Omega $ is in $L\log ^{+}L(\mathbb{S}^{n-1})$ , i.e. 
\begin{equation}
\int_{\mathbb{S}^{n-1}}\Omega (y') \log^+\left\vert \Omega (y^{\prime
})\right\vert d\sigma (y^{\prime })<\infty .  \label{1.3}
\end{equation}
Here, $\Omega $ is a homogeneous function of degree zero on $\mathbb{R}^{n}$
satisfying 
\begin{equation}
\int\limits_{\mathbb{S}^{n-1}}\Omega (x)d\sigma (x)=0.  \label{1.8}
\end{equation}
Moreover, it is shown in \cite{CZ1,CZ2} that the condition \ $\Omega\in
L\log ^{+}L(\mathbb{S}^{n-1})$ is optimal in the sense that the
corresponding operator might not be bounded if the kernel $\Omega $ is
assumed to be in $\Omega \in L\left( \log ^{+}L\right) ^{1-\varepsilon }(
\mathbb{S}^{n-1})\backslash L\log ^{+}L(\mathbb{S}^{n-1})$ for some $
\varepsilon >0$. \ It is known that the class $L\log ^{+}L(\mathbb{S}^{n-1})$
is a proper subspace of \ $H^{1}(\mathbb{S}^{n-1})$.

In \cite{BLOCK RELATION}, Ye and Zhu proved the following result:

\begin{maintheorem}[\cite{BLOCK RELATION}]\label{thm:A}
	For $1<q<\infty$ and $\nu>-1$, the space
	$B_{q}^{0,\nu}(\mathbb{S}^{n-1})$ is a proper subspace of the space
	$H^{1}(\mathbb{S}^{n-1})+L(\log^{+}L)^{1+\nu}(\mathbb{S}^{n-1})$.
\end{maintheorem}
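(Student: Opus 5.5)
To prove Theorem~\ref{thm:A}, I would establish two things: the inclusion $B_{q}^{0,\nu}\subseteq H^{1}+L(\log^{+}L)^{1+\nu}$, and then that it is strict.

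\emph{Inclusion.} Let $\Omega=\sum_j \mu_j a_j$ with $A_q^{0,\nu}(\{\mu_j\})<\infty$. Write $c_j=|M_j|^{-1}\int a_j\,d\sigma$ for the mean of $a_j$, and split each block as
\[
a_j=(a_j-c_j\chi_{M_j})+c_j\chi_{M_j}.
\]
The first piece is supported in the cap $M_j$, has mean zero, and satisfies
\[
\|a_j-c_j\chi_{M_j}\|_q\le 2|M_j|^{-1/q'}.
\]
So it is a constant multiple of a $(1,q)$-atom in the sense of Coifman--Weiss. Since $\sum|\mu_j|\le A_q^{0,\nu}<\infty$, the sum $\sum_j\mu_j(a_j-c_j\chi_{M_j})$ lies in $H^1(\mathbb{S}^{n-1})$, with norm at most $2\sum|\mu_j|$.

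For the second piece, $|c_j|\le |M_j|^{-1}\|a_j\|_1\le |M_j|^{-1}$ by H\"older. Hence $g_j:=c_j\chi_{M_j}$ satisfies $\|g_j\|_\infty\le |M_j|^{-1}$, and
\[
\int |g_j|\,(1+\log^+|g_j|)^{1+\nu}\,d\sigma\le \bigl(1+\log^+|M_j|^{-1}\bigr)^{1+\nu}\lesssim b_j .
\]
The functional $\int|f|(1+\log^+|f|)^{1+\nu}$ is not a norm, so I would pass to the Luxemburg norm of the Orlicz space with Young function $\Phi(t)=t(1+\log^+t)^{1+\nu}$. For $\nu>-1$ this function is (equivalent to) a convex increasing function with $\Phi(t)/t\uparrow$. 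The point to check is $\|g_j\|_{L_\Phi}\lesssim b_j$. This follows from the scaling estimate
\[
\int \Phi(|g_j|/\lambda)\le 1 \quad\text{for } \lambda\approx b_j ,
\]
where for $\lambda\ge1$ the crude bound $\Phi(t/\lambda)\le \Phi(t)/\lambda$ suffices. The triangle inequality in $L_\Phi$ then gives $\sum\mu_j g_j\in L(\log^+L)^{1+\nu}$, with norm bounded by $A_q^{0,\nu}$. Convergence in $L^1$ identifies both sums with the corresponding pieces of $\Omega$, and the inclusion follows.

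\emph{Properness.} I would exhibit $\Omega\in H^1(\mathbb{S}^{n-1})\setminus B_q^{0,\nu}$. The natural obstruction is that every element of $B_q^{0,\nu}$ lies in $L^1$ with local $L^q$-type control. For instance, I expect to show $B_q^{0,\nu}\subseteq L(\log L)^{1+\nu}$ without any $H^1$ part: for a single block,
\[
\int|a_j|\log^+|a_j|\ \text{ is bounded by } C\bigl(1+\log|M_j|^{-1}\bigr)
\]
via H\"older and Jensen, and more generally one gets $(1+\log)^{1+\nu}$. Summing in the Orlicz norm as above then shows $B_q^{0,\nu}\subseteq L_\Phi$. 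It therefore suffices to find an $H^1$ function not in $L(\log L)^{1+\nu}$. Take a sum of atoms
\[
\sum_k k^{-2}\bigl(|M_k|^{-1}\chi_{M_k}-|M_k'|^{-1}\chi_{M_k'}\bigr)
\]
over disjoint paired caps with $|M_k|=e^{-e^{k}}$, say. This sum lies in $H^1$, but its $\Phi$-integral behaves like
\[
\sum_k k^{-2}\,(\log|M_k|^{-1})^{1+\nu}=\sum_k k^{-2}e^{k(1+\nu)}=\infty .
\]

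The main obstacle is the Orlicz-norm bookkeeping: $\Phi$ is not homogeneous, and for $-1<\nu<0$ one needs $\Phi$ (up to equivalence) to be convex so that Luxemburg sums behave. The per-block estimate $\|a_j\|_{L_\Phi}\lesssim b_j$ for a genuine $q$-block, not just a constant one, also requires a careful H\"older and Jensen splitting on $\{|a_j|>|M_j|^{-2}\}$.
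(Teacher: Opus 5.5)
Your argument is correct, but the paper gives no proof of Theorem~\ref{thm:A}: it is quoted from Ye--Zhu. Inside the paper, the inclusion half is an immediate corollary of Theorem~\ref{thm:B}, since $L(\log^+L)^{1+\nu}\subseteq H^1+L(\log^+L)^{1+\nu}$. Your first half, which splits $a_j$ into twice a $(1,q)$-atom plus $c_j\chi_{M_j}$, is therefore valid but redundant. Your properness step already uses $B_q^{0,\nu}\subseteq L(\log^+L)^{1+\nu}$, which is exactly Theorem~\ref{thm:B} and gives the inclusion with no $H^1$ component at all.

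The per-block estimate you flag as the main obstacle does go through with your splitting, and no Jensen is needed. On $\{|a|\le|M|^{-2}\}$ the logarithmic factor is at most $(1+2\log^+|M|^{-1})^{1+\nu}$ while $\|a\|_1\le1$. On $\{|a|>|M|^{-2}\}$, the bounds $(1+\log^+|a|)^{1+\nu}\le C(1+|a|^{(q-1)/2})$ and $|a|^{-(q-1)/2}<|M|^{q-1}$ reduce everything to $\|a\|_1+|M|^{q-1}\|a\|_q^q\le2$. The paper's Lemma~\ref{lem:lemma 0.4} does the same thing by rescaling $t=|M||a|$ and using $\varphi_\beta(t)\lesssim t+t^{q}$ (Lemma~\ref{lem:lemma 0.2}(i) with $\alpha\beta=q-1$). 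Your Luxemburg triangle inequality is the paper's Jensen step with weights $|\mu_j|b_j/\gamma$ in another guise.

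For properness, your witness in $H^1\setminus L(\log^+L)^{1+\nu}$ is elementary and works in every dimension, but it rests entirely on Theorem~\ref{thm:B}. Your first-half inclusion alone produces a genuine $H^1$ component, so it would not show that such a function avoids $B_q^{0,\nu}$. The paper's results supply a witness from the other side: Theorem~\ref{thm:C} gives a function in $\bigcap_{\beta>0}L(\log L)^\beta$ lying outside every $B_q^{0,\nu}$. This proves the stronger statement that $B_q^{0,\nu}$ is already proper inside $L(\log^+L)^{1+\nu}$. The price is the construction in Proposition~\ref{prop:prop 3.1} and the duality test against $S_N$, written out only for $n=2$.
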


As a particular special case of Theorem \ref{thm:A}, we obtain that $B_{q}^{0,0}(
\mathbb{S}^{n-1})$ is a proper subspace of $H^{1}(\mathbb{S}^{n-1})$.
However, the relationship between the block spaces $B_{q}^{0,\nu }(\mathbb{S}
^{n-1})$ \ and the space $L\left( \log ^{+}L\right) ^{1+\nu }(\mathbb{S}
^{n-1})$ was left open in \cite{BLOCK RELATION} and, to the best of our
knowledge, has not been addressed in the subsequent literature. In this
note, we prove the following result:

\begin{maintheorem}\label{thm:B}
Let $1<q<\infty $ and $\nu>-1$. Then $B_{q}^{0,\nu }(
\mathbb{S}^{n-1})\subset L\left( \log ^{+}L\right) ^{1+\nu }(\mathbb{S}^{n-1}). $
\end{maintheorem}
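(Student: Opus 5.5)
The plan is to view $L(\log^{+}L)^{1+\nu}(\mathbb{S}^{n-1})$ as an Orlicz space and estimate each block uniformly in its Luxemburg norm. Fix $\nu>-1$ and let $\Phi(t)=t\,(\log(e+t))^{1+\nu}$ for $t\ge 0$. For $\nu>-1$ this is (after harmlessly modifying it near $0$ if needed) an increasing convex Young function satisfying the $\Delta_2$ condition. Moreover, on a finite measure space, $\int\Phi(|\Omega|)<\infty$ if and only if $\Omega\in L(\log^{+}L)^{1+\nu}$. Let $L^{\Phi}$ be the associated Orlicz space with the Luxemburg norm $\|\cdot\|_{\Phi}$. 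It is a Banach space continuously embedded in $L^{1}(\mathbb{S}^{n-1})$. By $\Delta_2$, membership in $L^{\Phi}$ is equivalent to finiteness of the modular, so it suffices to show $\Omega\in L^{\Phi}$.

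\emph{Step 1 (the key block estimate).} Let $a$ be a $q$-block supported in a cap $M$ with $\|a\|_q\le |M|^{-1/q'}$, and put $b=1+(\log^{+}\frac{1}{|M|})^{1+\nu}$. I will show $\int_{\mathbb{S}^{n-1}}\Phi(|a|)\,d\sigma\le C\,b$, with $C$ depending only on $q,\nu,n$. Split $M=E_1\cup E_2$, where $E_1=\{|a|\le |M|^{-2}\}$ and $E_2=\{|a|>|M|^{-2}\}$.

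On $E_1$ we have $\log(e+|a|)\lesssim 1+\log^{+}\frac1{|M|}$. By H\"older, $\|a\|_1\le |M|^{1/q'}\|a\|_q\le 1$. Also $(1+L)^{1+\nu}\lesssim 1+L^{1+\nu}$ for $1+\nu>0$. Together these give $\int_{E_1}\Phi(|a|)\lesssim b$.

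On $E_2$, let $\delta=(q-1)/2$ and use $(\log(e+t))^{1+\nu}\le C_\delta\, t^{\delta}$. This gives
\[
\int_{E_2}\Phi(|a|)\lesssim\int_{E_2}|a|^{q}\,|a|^{1+\delta-q}\le \|a\|_q^{q}\,|M|^{2(q-1-\delta)}\le |M|^{-(q-1)}|M|^{q-1}=1 .
\]
This is the only place where $q>1$ is used. I expect this truncation at the level $|M|^{-2}$, and the bookkeeping with the exponent $1+\nu$ when $-1<\nu<0$, to be the main technical point.

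\emph{Step 2 (from modular to norm).} Since $\Phi$ is convex with $\Phi(0)=0$, we have $\Phi(t/\lambda)\le \Phi(t)/\lambda$ for $\lambda\ge1$. Taking $\lambda=C b\ge 1$ gives $\int\Phi(|a|/(Cb))\le 1$, hence $\|a\|_{\Phi}\le C\,b$ for every $q$-block.

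\emph{Step 3 (summation).} If $\Omega=\sum_j\mu_j a_j$ with $A_q^{0,\nu}(\{\mu_j\})<\infty$, then $\sum_j|\mu_j|\,\|a_j\|_{\Phi}\le C\,A_q^{0,\nu}(\{\mu_j\})<\infty$. Therefore the series converges absolutely in the Banach space $L^{\Phi}$ to some $\tilde\Omega$. Since $L^{\Phi}\hookrightarrow L^{1}$, the series also converges in $L^{1}$ to $\tilde\Omega$, so $\tilde\Omega=\Omega$ a.e. Hence $\Omega\in L^{\Phi}$, with $\|\Omega\|_{\Phi}\le C\,A_q^{0,\nu}(\{\mu_j\})$. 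By $\Delta_2$ this gives $\int\Phi(|\Omega|)<\infty$, i.e. $\Omega\in L(\log^{+}L)^{1+\nu}(\mathbb{S}^{n-1})$.

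The strictness of the inclusion, if intended, would be handled separately by exhibiting an explicit function in $L(\log^{+}L)^{1+\nu}$ that is not in $B_q^{0,\nu}$.
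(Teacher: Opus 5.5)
Your argument is correct and has the same architecture as the paper's proof: a uniform modular bound $\int\Phi(|a|)\,d\sigma\le C\bigl(1+(\log^{+}\frac{1}{|M|})^{1+\nu}\bigr)$ for a single block (Lemma~\ref{lem:lemma 0.4}), normalization by convexity, and then summation over the blocks. The differences are only technical: the paper proves the block bound by rescaling to $|M|\,|a|$, splitting off $\log\frac{1}{|M|}$ and using $\varphi_\beta(t)\lesssim t+t^{q}$, where you truncate at height $|M|^{-2}$; and it sums via Jensen's inequality with weights $|\mu_j|b_j/\gamma$, Fatou's lemma and Lemma~\ref{lem:lemma 0.2}(ii), where you use the Luxemburg-norm triangle inequality, completeness of $L^{\Phi}$ and $\Delta_2$, which is the same convexity argument in abstract form.
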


Theorem \ref{thm:B} gives a precise characterization of the position of $B_{q}^{0,0}(
\mathbb{S}^{n-1})$ within the scale of classical function spaces on $\mathbb{
S}^{n-1}$. In fact, we have the following chain of inclusions:
\begin{equation}
\mathcal{C}^{1}(\mathbb{S}^{n-1})\subsetneqq Lip_{\alpha }(\mathbb{S}
^{n-1})\subsetneqq L^{q}(\mathbb{S}^{n-1})\subsetneqq B_{q}^{0,0}(\mathbb{S}
^{n-1})\subsetneqq L(\log ^{+}L)(\mathbb{S}^{n-1})\subsetneqq H^{1}(
\mathbb{S}^{n-1})\subsetneqq L^{1}(\mathbb{S}^{n-1})  \label{1.16}
\end{equation}
for $0<\alpha <1$ and $q>1$. A further and more interesting consequence of
Theorem B is the following inclusion: 
\begin{equation*}
B_{q}^{0,-\frac{1}{2}}(\mathbb{S}^{n-1})\subsetneqq L(\log ^{+}L)^{\frac{1}{2
}}(\mathbb{S}^{n-1}).
\end{equation*}
We shall also prove the following  two theorems, which give a complete
characterization of the relationship between block spaces and the relevant
logarithmic Orlicz spaces, as well as among the block spaces themselves:

\begin{maintheorem}\label{thm:C} 
\[
\left(
\bigcap_{\beta>0}L(\log L)^\beta(\mathbb S^{n-1})
\right)
\setminus
\left(
\bigcup_{\substack{v>-1\\1<q\leq\infty}}
B_q^{0,v}(\mathbb S^{n-1})
\right)
\neq\varnothing.
\]
\end{maintheorem}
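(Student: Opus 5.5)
The plan is to construct one explicit function $\Omega\ge 0$ as a superposition of ``dispersed spikes'' living at rapidly separating scales, and to rule out membership in every block space at once by a duality/Hölder argument. First we reduce the union: since a $\infty$-block is a $q$-block for every finite $q$ (Hölder gives $\|a\|_q\le|M|^{1/q}\|a\|_\infty\le|M|^{-1/q'}$), we have $B_\infty^{0,\nu}\subseteq B_q^{0,\nu}$. It therefore suffices to show $\Omega\notin B_q^{0,\nu}$ for every $1<q<\infty$ and every $\nu>-1$.

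\emph{Construction.} For $k\ge1$ put $m_k=e^{-k^2}$ and $h_k=e^{-k}/m_k$. Choose spacings $\delta_k\downarrow0$ extremely fast, say $\log(1/\delta_k^{\,n-1})=\exp(\exp(e^{k}))$, and let $E_k$ be a union of pairwise disjoint small caps centred at a maximal $\delta_k$-separated net of $\mathbb S^{n-1}$. Their radii are chosen so that $\sigma(E_k)\approx m_k$, and so that the following \emph{dispersion property} holds with a constant $C$ independent of $k$:
\[
\sigma(E_k\cap M)\le C\,m_k\,|M|\quad\text{for every cap } M \text{ with } |M|\ge \delta_k^{\,n-1}.
\]
This is a routine packing count. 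Set $\Omega=\sum_k h_k\chi_{E_k}$; making the $E_k$ disjoint, or merely noting that the overlaps are negligible, costs nothing. Since $\log h_k\approx k^2$, we get $\int\Omega(\log^+\Omega)^\beta\,d\sigma\lesssim\sum_k e^{-k}k^{2\beta}<\infty$ for every $\beta>0$, so $\Omega\in\bigcap_{\beta>0}L(\log L)^\beta$.

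\emph{Exclusion.} Suppose $\Omega=\sum_j\mu_ja_j$ with $A_q^{0,\nu}(\{\mu_j\})<\infty$. Then $\Omega\le\sum_j|\mu_j||a_j|$, and Hölder together with the block normalisation gives
\[
\int_{E_k}|a_j|\,d\sigma\le \sigma(E_k\cap M_j)^{1/q}|M_j|^{-1/q'}.
\]
If $|M_j|\ge\delta_k^{n-1}$, dispersion bounds this by $C\,m_k^{1/q}$. Otherwise we use the trivial bound $1$. Hence
\[
e^{-k}=h_k\sigma(E_k)\lesssim \sum_j|\mu_j|\Bigl(m_k^{1/q}+\chi_{\{|M_j|<\delta_k^{n-1}\}}\Bigr).
\]
Multiply by $w_k=e^{k}$ and sum over $k$. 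The left side diverges. On the right, $\sum_k e^{k}m_k^{1/q}=\sum_k e^{k-k^2/q}=C_q<\infty$. For a fixed cap $M$, the number $K(M)$ of indices $k$ with $\delta_k^{n-1}>|M|$ satisfies $K(M)\le\log\log\log\log(1/|M|)+O(1)$. Consequently
\[
\sum_{k\le K(M)}e^{k}\lesssim\log\log(1/|M|)\le C_\nu\Bigl(1+(\log^+ 1/|M|)^{1+\nu}\Bigr),
\]
which holds because $1+\nu>0$. So the right side is at most $(C_q+C_\nu)A_q^{0,\nu}(\{\mu_j\})<\infty$, a contradiction. Interchanging sums is legitimate since all terms are nonnegative.

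\emph{Main obstacle.} The main obstacle is the dispersion property and the choice of scales. The net spacing $\delta_k$ must be small enough that the count $K(M)$ grows slower than any power of $\log(1/|M|)$; this is what forces the tower-type choice of $\delta_k$. At the same time, the caps forming $E_k$ must be fine enough that every cap of size at least $\delta_k^{n-1}$ sees only a proportion $\approx m_k$ of $E_k$. I would verify this by comparing $M$ with the union of the $\delta_k$-cells of the net that it meets. Each cell carries $E_k$-mass $\approx m_k\delta_k^{n-1}$. A cap of measure $\ge\delta_k^{n-1}$ meets $\lesssim|M|/\delta_k^{n-1}$ cells, up to a dimensional constant, and this gives the claim. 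Everything else is bookkeeping.
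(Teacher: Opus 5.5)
Your proof is correct and, in the exclusion step, takes a genuinely different route from the paper, once you fix one slip in the Hölder step. Hölder gives $\int_{E_k}|a_j|\,d\sigma\le\|a_j\|_q\,\sigma(E_k\cap M_j)^{1/q'}\le\bigl(\sigma(E_k\cap M_j)/|M_j|\bigr)^{1/q'}$, not $\sigma(E_k\cap M_j)^{1/q}|M_j|^{-1/q'}$. With your exponent, the claimed bound $Cm_k^{1/q}$ fails for $q>2$. With the correct exponent, dispersion gives $(Cm_k)^{1/q'}$. Since $\sum_k e^{k-k^2/q'}<\infty$ for every finite $q'$, nothing downstream changes. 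The exclusion step never uses disjointness, because $\Omega\ge h_k\chi_{E_k}$ pointwise. Disjointness only simplifies the Orlicz computation, and you can obtain it as the paper does, by placing the $k$-th net inside a compact subset of the complement of the earlier $E_j$.

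\emph{The paper's route.} The paper works on $\mathbb S^1$ with a borderline profile: $m_k=1/(1000k^2)$ and $\gamma_k=ke^{-\sqrt{\log k}}$, so that $\sum_k\gamma_k^q m_k$ diverges only barely. It proves a dispersion estimate with arbitrarily small logarithmic loss, $|E_k\cap I|\le C_\eta m_k|I|(1+\log\frac1{|I|})^\eta$, valid for \emph{every} arc. Small arcs are absorbed into that loss through the choice $\delta_k<\frac{\pi}{100}e^{-m_k^{-k}}$ (Lemma~\ref{lem:lemma 2.3}). It then tests $\Omega$ against the single dual function $S_N=\sum_{k\le N}\gamma_k^{q-1}\chi_{E_k}$. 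This gives $A_N\lesssim A_N^{1/q'}A_q^{0,\nu}(\{\mu_j\})$ with $A_N=\sum_{k\le N}\gamma_k^q m_k\to\infty$.

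\emph{Your route.} You test level by level against $\chi_{E_k}$. You need dispersion only in loss-free form above the scale $\delta_k^{n-1}$, and you give small caps the trivial bound $\|a_j\|_1\le 1$. You pay for small caps by counting levels: the tower-type separation of scales makes $\sum_{k:\,\delta_k^{n-1}>|M|}e^k$ grow only like an iterated logarithm of $1/|M|$, which every weight $b_j$ absorbs for every $\nu>-1$.

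\emph{What each buys.} Your argument works directly on $\mathbb S^{n-1}$ and makes the reduction of $q=\infty$ explicit; the paper covers that case only implicitly through $B_\infty^{0,\nu}\subseteq B_q^{0,\nu}$. Your geometric input is minimal, at the cost of a very lacunary $\Omega$: $h_k=e^{k^2-k}$ on sets of measure $e^{-k^2}$. The paper's duality formulation packages everything into one reusable estimate. The same sets and the same $S_N$ argument prove Theorem~\ref{thm:D} with $\Omega=\sum_k k^{1/q}\chi_{E_k}$ almost verbatim.
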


It should be observed that Theorem \ref{thm:C} implies that $\bigcup\limits_{1<q\leq
\infty }B_{q}^{0,\nu }(\mathbb{S}^{n-1})$ is a proper subspace of $
L\left( \log ^{+}L\right) ^{1+\nu }(
\mathbb{S}^{n-1})$. Thus, even after taking the union over all $q>1,$ the
resulting block space remains strictly smaller than the corresponding
related logarithmic Orlicz spaces.

\begin{maintheorem}\label{thm:D} 
	Let \(\nu>-1\) and \(1<p<q\leq\infty\). Then
\(
B_q^{0,\nu}(\mathbb S^{n-1})
\subsetneqq
B_p^{0,\nu}(\mathbb S^{n-1}).
\)
\end{maintheorem}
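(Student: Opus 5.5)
The plan has two parts: first prove the inclusion $B_q^{0,\nu}\subseteq B_p^{0,\nu}$, then build a function that lies in $B_p^{0,\nu}$ but not in $B_q^{0,\nu}$.

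\textbf{Inclusion.} If $a$ is a $q$-block supported on a cap $M$ with $\|a\|_q\le |M|^{-1/q'}$, then H\"older's inequality on $M$ gives
\[
\|a\|_p\le |M|^{1/p-1/q}\|a\|_q\le |M|^{1/p-1/q-1+1/q}=|M|^{-1/p'}.
\]
So every $q$-block is also a $p$-block on the same cap, with the same coefficient $b_j$. Hence any decomposition $\Omega=\sum_j\mu_ja_j$ witnessing $\Omega\in B_q^{0,\nu}$ also witnesses $\Omega\in B_p^{0,\nu}$, with the same value of $A^{0,\nu}$. The case $q=\infty$ is the same argument with $\|a\|_\infty\le|M|^{-1}$.

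\textbf{Strictness.} Take $\Omega=\chi_M|M|^{-1/p}\,\phi$, a single $p$-block on a fixed cap $M$, where $\phi$ is a local power singularity that lies in $L^p$ but not in $L^r$ for any $r>p$. Near the center $\theta_0$ of $M$, set
\[
\phi(\theta)=|\theta-\theta_0|^{-(n-1)/p}\,\Bigl(\log\tfrac{e}{|\theta-\theta_0|}\Bigr)^{-\gamma},\qquad \gamma p>1.
\]
Then $\phi\in L^p(M)$, while $\phi\notin L^r$ near $\theta_0$ for every $r>p$. After normalizing, $\Omega$ is a single $p$-block, so $\Omega\in B_p^{0,\nu}$.

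To show $\Omega\notin B_q^{0,\nu}$, I would look for a necessary condition for membership in $B_q^{0,\nu}$ that $\Omega$ violates; local $L^q$ integrability near a point is not one, because sums of blocks need not be locally $L^q$. The natural candidate is the condition
\[
\int_0^1 \lambda^{\,q-1}\,\sigma(\{|\Omega|>\lambda\})\,\ldots
\]
more precisely a weak-type estimate on the distribution function. For a single $q$-block on a cap $M_j$, Chebyshev's inequality gives
\[
\sigma(\{|a_j|>\lambda\})\le \min\bigl(|M_j|,\ \lambda^{-q}|M_j|^{-q/q'}\bigr).
\]

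\textbf{Main obstacle.} The hard step is summing these distribution bounds over $j$ to obtain a decay rate for $\sigma(\{|\Omega|>\lambda\})$ that the explicit $\Omega$ fails. I expect a Lorentz-type bound of the form
\[
\sigma(\{|\Omega|>\lambda\})\lesssim \lambda^{-1}\,\bigl(\log\lambda\bigr)^{-(1+\nu)}\,\ldots
\]
This is only log-type and does not separate $p$ from $q$. Distribution functions therefore will not suffice, and the argument must exploit the fact that the singularity is concentrated at a single point. The approach I would take is to choose a highly concentrated example, namely $\Omega=\sum_k c_k\, |M_k|^{-1/p'}\chi_{M_k}$ with caps $M_k$ shrinking to $\theta_0$ and $c_k$ chosen so that $\sum_k c_k\log^{1+\nu}(1/|M_k|)<\infty$. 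I would then show that any $q$-block decomposition of $\Omega$ near $\theta_0$ must pay more, by testing against dual functions $g_k=|M_k|^{-1/q}\chi_{M_k}$ and using $\langle a_j,g_k\rangle\le\ldots$ to bound the contribution of each block. This duality step is the one I would expect to be hardest and least certain.
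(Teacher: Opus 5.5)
Your inclusion step is correct; it is the standard H\"older argument, and the paper simply quotes the inclusion. The strictness half, however, has a genuine gap. Both functions you propose lie in $B_\infty^{0,\nu}$, hence in every $B_q^{0,\nu}$, so no test-function argument can exclude them.

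\emph{The cap example.} For $\Omega=\sum_k c_k|M_k|^{-1/p'}\chi_{M_k}$, write each term as $\mu_k a_k$ with $a_k=|M_k|^{-1}\chi_{M_k}$, which is an $\infty$-block, and $\mu_k=c_k|M_k|^{1/p}\le c_k$. This decomposition is admissible in every $B_q^{0,\nu}$, with cost at most $\sum_k c_k b_k$, which is finite under your assumption on $c_k$. More generally, a single-block representation $c\chi_M=\mu a$ needs $|\mu|\ge c|M|$ in every block space, because $\|a\|_1\le 1$ for any normalized $q$-block. The choice $\mu=c|M|$, $a=|M|^{-1}\chi_M$ works simultaneously for all $q\in(1,\infty]$. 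So functions built from caps can never separate $p$ from $q$.

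\emph{The power singularity.} Cut $\phi$ into the shells $2^{-k-1}<|\theta-\theta_0|\le 2^{-k}$ and place each piece on the cap of radius $2^{-k}$. This produces $\infty$-blocks with coefficients $\lesssim 2^{-k(n-1)/p'}$, and $\sum_k 2^{-k(n-1)/p'}k^{1+\nu}<\infty$.

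\emph{The test functions.} Your $g_k=|M_k|^{-1/q}\chi_{M_k}$ fail for a related reason. Pairing $g_k$ with the $q$-block $|M_k|^{-1}\chi_{M_k}$ gives $|M_k|^{-1/q}$, which is not controlled by $b=1+(\log^+\frac1{|M_k|})^{1+\nu}$. So no bound of the form $|\langle a_j,g\rangle|\lesssim b_j$ is available. The heuristic of exploiting concentration at a point runs the wrong way: concentrated mass is exactly what blocks on small caps absorb cheaply.

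\emph{The missing idea.} The large values must be spread over many tiny arcs distributed uniformly around the circle, so that the natural dual test function has small $L^{q'}$ mass on every cap. Working on $\mathbb S^1$, the paper constructs pairwise disjoint sets $E_k$ with $|E_k|=\frac1{1000k^2}$. Each is a union of short arcs centred at the points of a very fine separated net, and they satisfy $|E_k\cap I|\le C_\eta|E_k|\,|I|\,(1+\log\frac1{|I|})^\eta$ for every arc $I$ (Proposition 3.1).

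The paper then takes $\Omega=\sum_k k^{1/q}\chi_{E_k}$, which is in $L^p\subseteq B_p^{0,\nu}$ because $p<q$, and $S_N=\sum_{k\le N}k^{1/q'}\chi_{E_k}$. Set $A_N=\sum_{k\le N}k|E_k|$. With $\eta=(1+\nu)q'$, one gets $\|S_N\|_{L^{q'}(I)}\le C|I|^{1/q'}(1+\log\frac1{|I|})^{1+\nu}A_N^{1/q'}$ uniformly over arcs. H\"older block by block then gives $A_N=\int\Omega S_N\le C A_N^{1/q'}\sum_j|\mu_j|b_j$ for any $q$-block decomposition. Thus $A_N^{1/q}$ stays bounded, contradicting $A_N=\frac1{1000}\sum_{k\le N}\frac1k\to\infty$. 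The case $q=\infty$ follows from the finite case via $B_\infty^{0,\nu}\subseteq B_{2p}^{0,\nu}\subsetneqq B_p^{0,\nu}$.
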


We end this section by pointing out that, after the completion of this
paper, it came to our attention that the inclusion stated in Theorem B above
was established in the recent preprint of Chen, Ji, Wang and Wu \cite
{Preprint}. The present work was developed independently, and the proof
given here follows a different route. In particular, our argument is direct
and elementary, and is distinct from the approach developed in \cite
{Preprint}.

\section{Proof of Theorem \ref{thm:B}}

Let $\varphi _{\beta }(t)=t\log ^{\beta }(e+t),\beta >0$. Then $\varphi
_{\beta }$ is convex and increasing on $[0,\infty )$ with $\varphi _{\beta
}(0)=0$. We shall need the following lemma:

\begin{lemma}
\label{lem:lemma 0.2}

(i) $\varphi _{\beta }(t)\leq (2^{\beta }+\frac{1}{\alpha ^{\beta }}
)2^{\beta }\left( t+t^{\alpha \beta +1}\right) ,$ $t\geq 0$ and $\alpha >0$.

(ii) $\varphi _{\beta }(\lambda t)\leq (\lambda +\lambda \left( \log
^{+}\lambda \right) ^{\beta })\,\varphi _{\beta }(t),$ $t\geq 0$ and $
\lambda \geq 0$.
\end{lemma}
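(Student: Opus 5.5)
Both parts reduce to elementary one-variable estimates on $\log(e+\cdot)$. In each we split according to whether the relevant argument is small or large compared with a fixed constant. Throughout we use $\log(e+t)\ge 1$ for $t\ge 0$, and that $\log$ is increasing and subadditive in the form $\log(e+xy)\le \log(e+x)+\log(e+y)$ for $x,y\ge0$. The latter holds because $e+xy\le (e+x)(e+y)$.

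For (i) we split at $t=1$. If $0\le t\le 1$, then $\log(e+t)\le \log(e+1)\le 2$, so $\varphi_\beta(t)\le 2^\beta t$. This is already dominated by the right-hand side, since $(2^\beta+\alpha^{-\beta})2^\beta\ge 2^\beta$. If $t\ge1$, then $e+t\le (e+1)t\le e^2 t$, so $\log(e+t)\le 2+\log t$. Using $(a+b)^\beta\le 2^\beta(a^\beta+b^\beta)$ for $a,b\ge0$, we get $\log^\beta(e+t)\le 2^\beta(2^\beta+\log^\beta t)$. The key point is the elementary inequality $\log t\le \alpha^{-1} t^{\alpha}$ for $t\ge 1$, which follows from $\log s\le s$ applied to $s=t^\alpha$. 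It gives $\log^\beta t\le \alpha^{-\beta}t^{\alpha\beta}$. Multiplying by $t$ yields
\[
\varphi_\beta(t)\le 2^\beta\bigl(2^\beta t+\alpha^{-\beta}t^{\alpha\beta+1}\bigr)\le (2^\beta+\alpha^{-\beta})2^\beta(t+t^{\alpha\beta+1}),
\]
which completes (i). The only care needed is the constant bookkeeping ($\log(e+1)<2$ and $\log(e+t)\le 2+\log t$), and this is routine.

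For (ii) we write $\varphi_\beta(\lambda t)=\lambda t\log^\beta(e+\lambda t)$. If $\lambda\le 1$, monotonicity gives $\log(e+\lambda t)\le\log(e+t)$, so $\varphi_\beta(\lambda t)\le\lambda\varphi_\beta(t)$. If $\lambda>1$, then $e+\lambda t\le \lambda(e+t)$, so $\log(e+\lambda t)\le \log\lambda+\log(e+t)$. Using $\log(e+t)\ge1$, we obtain
\[
\log(e+\lambda t)\le \log(e+t)\,(1+\log^+\lambda).
\]
Hence $\varphi_\beta(\lambda t)\le \lambda(1+\log^+\lambda)^\beta\varphi_\beta(t)$.

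The main obstacle is converting $(1+\log^+\lambda)^\beta$ into the stated form $1+(\log^+\lambda)^\beta$ without a constant; in general this loses a factor of $2^{\beta}$. We would first try to sharpen the estimate rather than use the crude factorization. Set $s=\log^+\lambda$ and $u=\log(e+t)\ge1$, so the goal is $(u+s)^\beta\le u^\beta(1+s^\beta)$. Dividing by $u^\beta$ reduces this to $(1+s/u)^\beta\le 1+s^\beta$, and since $u\ge1$ it suffices to show $(1+s)^\beta\le 1+s^\beta$. This holds for $0<\beta\le1$ by subadditivity of $x\mapsto x^\beta$. For $\beta>1$ it fails, for instance at $s=1$. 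In that case we would check whether the intended statement carries an implicit constant $C_\beta$ (e.g.\ $2^{\beta}$). Since Lemma~\ref{lem:lemma 0.2}(ii) is only used to bound the Orlicz modular of a block sum up to constants, proving it with a factor $2^\beta$ suffices for Theorem~\ref{thm:B}, and we would record the lemma in that form if the constant-free version cannot be recovered.
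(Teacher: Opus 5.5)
Your proof of (i) is the paper's argument, with cleaner bookkeeping: split at $t=1$, use $\log(e+t)\le 2+\log t$, then $(a+b)^\beta\le 2^\beta(a^\beta+b^\beta)$, then $\log t\le\alpha^{-1}t^\alpha$. Your route for (ii) is also the paper's: $\lambda\le 1$ directly, and $e+\lambda t\le\lambda(e+t)$ for $\lambda>1$. The obstacle you flag is genuine, and the paper does not get around it either. Its chain for $\lambda\ge 1$ ends with $2^\beta(\lambda\log^\beta\lambda+\lambda)\varphi_\beta(t)$. So the paper proves exactly your version with the factor $2^\beta$, and the statement simply drops that factor.

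You can settle your hedge: for every $\beta>1$, (ii) as stated is false.
\begin{itemize}
\item For $t>0$ and $\lambda=e^s$, (ii) is equivalent to $\log(e+e^s t)\le(1+s^\beta)^{1/\beta}\log(e+t)$.
\item As $s\to0^+$, the left side exceeds $\log(e+t)$ by $\frac{t}{e+t}s+O(s^2)$.
\item The right side exceeds $\log(e+t)$ by at most $\frac{s^\beta}{\beta}\log(e+t)$, so the inequality fails for small $s$.
\item Concretely, $\beta=2$, $\lambda=e^{1/2}$, $t=1$ gives $\varphi_2(e^{1/2})\approx 3.583$, against $\tfrac54 e^{1/2}\log^2(e+1)\approx 3.554$.
\end{itemize}

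Your remark that it ``fails at $s=1$'' refutes only the sufficient inequality $(1+s)^\beta\le 1+s^\beta$, not (ii) itself. The step $\log(e+\lambda t)\le\log\lambda+\log(e+t)$ loses a full $\log\lambda$ near $t=0$, which is exactly where $u=\log(e+t)$ is close to $1$. For instance, with $\beta=2$ and $\lambda=e$, (ii) does hold for all $t\ge 0$.

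So the correct lemma is the one you prove: constant-free for $0<\beta\le 1$, and with a factor $2^\beta$ for $\beta>1$. As you say, this only multiplies the final bound in the proof of Theorem B by $2^{1+\nu}$, so nothing downstream is affected.
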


\begin{proof}
The verification of (i) is straightforward. If $0\leq t\leq 1$ and $\alpha
>0$, then $\varphi (t)\leq 2^{\beta }t\leq (2^{\beta }+\frac{1}{\alpha
^{\beta }})2^{\beta }(t+t^{\alpha \beta }).$ On the other hand, if $t\geq 1$
and $\alpha >0$, then 
\begin{eqnarray}
\log ^{\beta }(e+t) &\leq &\log ^{\beta }((e+1)t)=\left( \log (e+1)+\log
t\right) ^{\beta }  \notag \\
&\leq &\left( 2+\log t\right) ^{\beta }\leq 2^{2\beta }t+\frac{2^{\beta
}t^{\alpha \beta }}{\alpha ^{\beta }}\leq (2^{\beta }+\frac{1}{\alpha
^{\beta }})2^{\beta }t^{\alpha \beta }.  \label{ineq-1}
\end{eqnarray}
Here, we used the inequality $\log (t)\leq \frac{t^{\alpha }}{\alpha }$ for $
t\geq 1$ and $\ \alpha >0$. Thus, (i) is verified. Next, to verify (ii), we
first note that by convexity of $\varphi $ we have 
\begin{equation}
\varphi _{\beta }(\lambda t)\leq \lambda \varphi _{\beta }(t)  \label{e-1}
\end{equation}
for $0\leq \lambda <1$. Secondly, if $\lambda \geq 1$, then 
\begin{eqnarray}
\varphi _{\beta }(\lambda t) &=&(\lambda t)\log ^{\beta }(e+\lambda t)\leq
(\lambda t)\log ^{\beta }(\lambda (e+t))  \notag \\
&=&(\lambda t)\,\left( \log (\lambda )+\log (e+t)\right) ^{\beta }  \notag \\
&=&(\lambda t)2^{\beta }\log ^{\beta }(\lambda )+(\lambda t)2^{\beta }\log
^{\beta }(e+t)\leq 2^{\beta }(\lambda \log ^{\beta }\lambda +\lambda
)\,\varphi _{\beta }(t).  \label{e-2}
\end{eqnarray}
Combining the estimates (\ref{e-1}) and (\ref{e-2}), the verification of
(ii) is complete. This completes the proof.\end{proof} 
%%%%%%%%%%%%%%%%%%%%%%%%%%%%%%%%%%%%%%%%%%%

\begin{lemma}
\label{lem:lemma 0.4} Let $a$ be a $q$-block supported on a cap $M$ on $
\mathbb{S}^{n-1}$ with $\sigma (M)=|M|>0$. Then there exists a constant $
E_{q}$ independent of $a$ such that 
\begin{equation*}
\int_{S^{n-1}}|a(y^{\prime })|\log ^{\beta }(e+|a(y^{\prime })|)\,d\sigma
(y^{\prime })\leq C_{q,\beta }\left( 1+\left( \log ^{+}\frac{1}{|M|}\right)
^{\beta }\right) ,\quad 1<q<\infty .
\end{equation*}
where $C_{q,\beta }=(1+2^{2\beta +1}+\frac{2^{\beta +1}}{(\frac{q-1}{\beta }
)^{\beta }}).$
\end{lemma}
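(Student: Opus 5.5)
The plan is to split the integral according to the size of $|a|$ relative to a threshold, and to use Lemma \ref{lem:lemma 0.2}(i) with a carefully chosen $\alpha$. Since $\operatorname{supp} a\subset M$, Hölder's inequality and the block condition $\|a\|_q\le |M|^{-1/q'}$ give $\|a\|_1\le |M|^{1/q'}\|a\|_q\le 1$. We also have $\int |a|^q\,d\sigma\le |M|^{-q/q'}=|M|^{1-q}$.

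Split $M$ into $F=\{y'\in M: |a(y')|\le |M|^{-1}\}$ and its complement $G=M\setminus F$.

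On $F$, the logarithm is bounded in terms of $|M|$: $\log(e+|a|)\le \log(e+|M|^{-1})\le 1+\log^+\frac{1}{|M|}+\log(1+e)$ roughly. Thus $\log^\beta(e+|a|)\le 2^{\beta}\bigl(2^\beta+(\log^+\frac1{|M|})^\beta\bigr)$, possibly with slightly different constants. Integrating against $|a|$ and using $\|a\|_1\le 1$ yields a bound $C(1+(\log^+\frac1{|M|})^\beta)$.

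On $G$, where $|a|>|M|^{-1}\ge 1$ (note $|M|\le|\mathbb S^{n-1}|$; if $|\mathbb{S}^{n-1}|>1$ the threshold can simply be taken as $\max(1,|M|^{-1})$), apply Lemma \ref{lem:lemma 0.2}(i) with $\alpha=\frac{q-1}{\beta}$, so that $\alpha\beta+1=q$. Then
\[
\varphi_\beta(|a|)\le \Bigl(2^\beta+\tfrac{1}{\alpha^\beta}\Bigr)2^\beta\bigl(|a|+|a|^{q}\bigr).
\]
The $|a|$ term integrates to at most $1$. The $|a|^q$ term integrates to at most $|M|^{1-q}$, which is unfortunately not bounded. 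This is the main obstacle: the plain $L^q$ bound on $G$ is too crude.

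To get around it, I would instead exploit the fact that on $G$ we have $|a|>|M|^{-1}$. Then $|a|=|a|^q|a|^{1-q}\le |a|^q|M|^{q-1}$, so Chebyshev-type reasoning converts the $L^q$ mass into something small. Concretely, write $\log^\beta(e+|a|)\le C_\alpha |a|^{\alpha\beta}$ for $|a|\ge1$, which is the estimate \eqref{ineq-1}. This gives
\[
\int_G |a|\log^\beta(e+|a|)\le C\int_G|a|^{1+\alpha\beta}.
\]
We need the exponent to be $q$, and then the bound $\int_G |a|^q\le|M|^{1-q}$ is still too big. So I would choose the threshold and exponent more cleverly: bound $\log(e+|a|)\le \log(e+|M|^{-1})+\log\bigl(1+|a||M|\bigr)$. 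The first piece is handled as on $F$. For the second, use $\log(1+s)^\beta\le C_\alpha s^{\alpha\beta}$ with $s=|a||M|$ and $\alpha\beta=q-1$, which gives
\[
\int |a|\,(|a||M|)^{q-1}\le |M|^{q-1}|M|^{1-q}=1.
\]
Combining the pieces via $(x+y)^\beta\le 2^\beta(x^\beta+y^\beta)$ gives the claimed form $C_{q,\beta}(1+(\log^+\frac1{|M|})^\beta)$, with $(\frac{q-1}{\beta})^{-\beta}$ appearing through Lemma \ref{lem:lemma 0.2}(i) with $\alpha=(q-1)/\beta$. The constants should match $C_{q,\beta}$ up to the bookkeeping.
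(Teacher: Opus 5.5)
Your final argument is correct and is essentially the paper's. Your inequality $\log(e+|a|)\le\log(e+|M|^{-1})+\log(1+|M||a|)$ peels off $\log\frac1{|M|}$ just as the paper does via $\log(e+\frac{|M||a|}{|M|})\le\log(e+|M||a|)+\log\frac1{|M|}$. Both arguments then bound the logarithm of the normalized function $|M||a|$ by $(|M||a|)^{q-1}$ (Lemma \ref{lem:lemma 0.2}(i) with $\alpha\beta=q-1$) and conclude with $|M|^{q-1}\|a\|_q^q\le 1$ and $\|a\|_1\le 1$. Your restriction to $G$, where $|M||a|\ge 1$, is what makes $\log^\beta(1+s)\lesssim s^{q-1}$ valid when $\beta<q-1$, and it plays the role of the $t$ versus $t^{\alpha\beta+1}$ terms in Lemma \ref{lem:lemma 0.2}(i).

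On the exact constant: the paper's own final display drops a factor $2^\beta$ from both terms, so neither argument as written certifies the specific formula for $C_{q,\beta}$. Both do establish a bound with some constant depending only on $q$ and $\beta$, which is all that Theorem B uses, together with the constant being at least $1$.
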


\begin{proof}
Firstly, we have 
\begin{align}
& \int\limits_{\mathbb{S}^{n-1}}|a(y^{\prime })|\log ^{\beta
}(e+|a(y^{\prime })|)\,d\sigma (y^{\prime })  \notag \\
& =\int\limits_{M}|a(y^{\prime })|\log ^{\beta }(e+|a(y^{\prime
})|)\,d\sigma (y^{\prime })  \notag \\
& =\int\limits_{M}|M||a(y^{\prime })|\log ^{\beta }\left( e+\frac{
|M||a(y^{\prime })|}{|M|}\right) \,\frac{d\sigma (y^{\prime })}{|M|}.
\label{e3}
\end{align}
Notice that, if $0<|M|<1$, then

\begin{align}
& |M||a(y^{\prime })|\log ^{\beta }\left( e+\frac{|M||a(y^{\prime })|}{|M|}
\right)   \notag \\
& \leq |M|\left\vert a(y^{\prime })\right\vert \left( \log
(e+|M||a(y^{\prime })|)+\log \left( \frac{1}{|M|}\right) \right) ^{\beta } 
\notag \\
& \leq 2^{\beta }|M||a(y^{\prime })|\left( \log ^{\beta }(e+|M||a(y^{\prime
})|)+\left( 1+\left( \log ^{+}\frac{1}{|M|}\right) ^{\beta }\right) \right) .
\label{e4}
\end{align}

On the other hand, if $|M|\geq 1$, we have 
\begin{align}
& |M||a(y^{\prime })|\log ^{\beta }\left( e+\frac{|M||a(y^{\prime })|}{|M|}
\right)   \notag \\
& \leq |M||a(y^{\prime })|\left( \log ^{\beta }(e+|M||a(y^{\prime
})|)\right)   \notag \\
& \leq 2^{\beta }|M||a(y^{\prime })|\left( \log ^{\beta }(e+|M||a(y^{\prime
})|)+1+\left( \log ^{+}\frac{1}{|M|}\right) ^{\beta }\right) .  \label{e-5}
\end{align}

Therefore, by (\ref{e4}) and (\ref{e-5}), we have 
\begin{equation}
|M||a(y^{\prime })|\log ^{\beta }\left( e+\frac{|M||a(y^{\prime })|}{|M|}
\right) \leq 2^{\beta }|M||a(y^{\prime })|\log ^{\beta }(e+|M||a(y^{\prime
})|+2^{\beta }\left( 1+\left( \log ^{+}\frac{1}{|M|}\right) ^{\beta }\right)
|M||a(y^{\prime })|,\quad |M|>0.  \label{eq:equation 0.4}
\end{equation}

Finally, combining (\ref{e3}), (\ref{eq:equation 0.4}), and invoking Lemma 
\ref*{lem:lemma 0.2} with $\alpha \beta =q-1$ we get 
\begin{align}
& \int\limits_{S^{n-1}}|a(y^{\prime })|\log ^{\beta }(e+|a(y^{\prime
})|)\,d\sigma (y^{\prime })  \notag \\
& \leq 2^{\beta }\int\limits_{M}|M||a(y^{\prime })|\log ^{\beta
}(e+|M||a(y^{\prime })|\frac{d\sigma (y^{\prime })}{|M|}+2^{\beta }\left(
1+\left( \log ^{+}\frac{1}{|M|}\right) ^{\beta }\right)
\int\limits_{M}|M||a(y^{\prime })|\,\frac{d\sigma }{|M|}  \notag \\
& \leq (2^{\beta }+\frac{1}{(\frac{q-1}{\beta })^{\beta }})2^{\beta
}\int\limits_{M}\left( |a(y^{\prime })|+|M|^{q-1}|a(y^{\prime })|^{q}\right)
d\sigma (y^{\prime })+\left( 1+\left( \log ^{+}\frac{1}{|M|}\right) ^{\beta
}\right) \int\limits_{M}|a(y^{\prime })|d\sigma (y^{\prime })  \notag \\
& \leq (2^{\beta }+\frac{1}{(\frac{q-1}{\beta })^{\beta }})2^{\beta }\left(
\left\vert M\right\vert ^{\frac{1}{q^{\prime }}}\left\Vert a\right\Vert
_{q}+|M|^{q-1}\left\Vert a\right\Vert _{q}^{q}\right) +\left( 1+\left( \log
^{+}\frac{1}{|M|}\right) ^{\beta }\right) \left\Vert a\right\Vert
_{q}\left\vert M\right\vert ^{\frac{1}{q^{\prime }}}  \notag \\
& \leq (1+2^{2\beta +1}+\frac{2^{\beta +1}}{(\frac{q-1}{\beta })^{\beta }}
)\left( 1+\left( \log ^{+}\frac{1}{|M|}\right) ^{\beta }\right) .
\end{align}
\end{proof} %%%%%%%%%%%%%%%%%%%%%%%%%%%%%%%%%%%%%%%

Now, we are ready to prove Theorem B. 

\begin{proof}
Fix $1<q<\infty $ and $\nu >-1$. Let $\Omega \in B_{q}^{0,\nu }(\mathbb{S}
^{n-1})$ and $\Omega =\sum_{j=1}^{\infty }\mu _{j}a_{j}$ be a $q$-block
decomposition in $L^{1}(\mathbb{S}^{n-1})$. By Lemma \ref{lem:lemma 0.4}, we
have 
\begin{equation*}
\int_{\mathbb{S}^{n-1}}\varphi _{\nu +1}(|a_{j}|)\,d\sigma \leq C_{q,1+\nu
}\left( 1+\left( \log ^{+}\frac{1}{|M_{j}|}\right) ^{1+\nu }\right) \coloneqq
C_{q,1+\nu }b_{j}.
\end{equation*}
By convexity of $\varphi _{\nu +1}$ and since $C_{q,1+\nu }b_{j}\geq 1$, we have 
\begin{equation*}
\int\limits_{S^{n-1}}\varphi _{\nu +1}\left( \frac{|a_{j}|}{C_{q,1+\nu }b_{j}
}\right) d\sigma (y^{\prime })\leq 1.
\end{equation*}
Set 
\begin{equation*}
S_{N}=\sum_{j=1}^{N}\mu _{j}a_{j},\qquad \gamma =\sum_{j=1}^{\infty }|\mu
_{j}|b_{j},\qquad \beta _{j}=\frac{|\mu _{j}|b_{j}}{\gamma }.
\end{equation*}

By Jensen's inequality 
\begin{equation*}
\varphi _{\nu +1}\left( \frac{1}{C_{q,1+\nu }}\cdot \frac{|S_{N}|}{\gamma }
\right) \leq \varphi _{\nu +1}\left( \sum_{j=1}^{N}\frac{1}{C_{q,1+\nu }}
\cdot \frac{|\mu _{j}|\,|a_{j}|}{\gamma }\right) \leq \sum_{j=1}^{N}\beta
_{j}\varphi _{\nu +1}\left( \frac{1}{C_{q,1+\nu }}\cdot \frac{|a_{j}|}{b_{j}}
\right) .
\end{equation*}

Therefore, 
\begin{equation*}
\int\limits_{\mathbb{S}^{n-1}}\varphi _{\nu +1}\left( \frac{1}{C_{q,1+\nu }}
\cdot \frac{|S_{N}|}{\gamma }\right) d\sigma (y^{\prime })\leq
\sum_{j=1}^{N}\beta _{j}\int\limits_{S^{n-1}}\varphi _{\nu +1}\left( \frac{1
}{C_{q,1+\nu }}\cdot \frac{|a_{j}|}{b_{j}}\right) d\sigma (y^{\prime })\leq
1.
\end{equation*}

Since $\sum_{j=1}^{\infty }|\mu _{j}|\Vert a_{j}\Vert _{1}\leq \gamma ,$ the
series $\sum_{j=1}^{\infty }\mu _{j}a_{j}$ converges absolutely in $L^{1}(
\mathbb{S}^{n-1})$. Subsequently, $(S_{N})$ converges to $\Omega $ almost
everywhere. Thus, by Fatou's lemma we have 
\begin{align}
\int\limits_{\mathbb{S}^{n-1}}\varphi _{\nu +1}\left( \frac{1}{C_{q,1+\nu }}
\cdot \frac{|\Omega |}{\gamma }\right) d\sigma &
=\int_{S^{n-1}}\lim_{N\rightarrow \infty }\varphi _{\nu +1}\left( \frac{1}{
C_{q,1+\nu }}\,\frac{|S_{N}|}{\gamma }\right) d\sigma   \notag
\label{eq:equation 0.6} \\
& {\leq }\liminf_{N\rightarrow \infty }\int_{S^{n-1}}\varphi _{\nu +1}\left( 
\frac{1}{C_{q,1+\nu }}\cdot \frac{|S_{N}|}{\gamma }\right) d\sigma \leq 1.
\end{align}
Finally, by equation \eqref{eq:equation 0.6} and (ii) of Lemma \ref
{lem:lemma 0.2}, we have 
\begin{align*}
& \int\limits_{\mathbb{S}^{n-1}}|\Omega (y^{\prime })|\log ^{1+\nu
}(e+|\Omega (y^{\prime })|)\,d\sigma (y^{\prime }) \\
& =\int\limits_{\mathbb{S}^{n-1}}\varphi _{\nu +1}(|\Omega (y^{\prime
})|)\,d\sigma (y^{\prime }) \\
& =\int\limits_{\mathbb{S}^{n-1}}\varphi _{\nu +1}\left( C_{q,1+\nu }\cdot 
\frac{1}{C_{q,1+\nu }}\cdot \gamma \cdot \frac{|\Omega (y^{\prime })|}{
\gamma }\right) \,d\sigma (y^{\prime }) \\
& \leq C_{q,1+\nu }\cdot \gamma \left( 1+\log ^{1+\nu }\left( C_{q,1+\nu
}\cdot \gamma \right) \right) <\infty .
\end{align*}
\end{proof}

\section{Proof of Theorems \ref{thm:C} and \ref{thm:D}}

For simplicity, we shall carry out the argument in this section for $n=2$. The case $n>2$ follows by suitable modifications. We shall  need the following proposition which we prove in Section \ref{sec:section4}.

\begin{proposition}\label{prop:prop 3.1}
	Let
	\[
	m_k=\frac{1}{1000k^2},
	\qquad
	\qquad k=3,\dots.
	\]
	Then there exists a sequence \((E_k)_{k\geq3}\) of pairwise disjoint measurable sets with
	\[
	|E_k|=m_k
	\]
	such that, for every \(\eta>0\), there exists a constant \(C_\eta>0\), independent of \(k\) and \(I\), for which
	\[
	|E_k\cap I|
	\leq
	C_\eta\,|E_k|\,|I|
	\left(1+\log\frac1{|I|}\right)^\eta
	\]
	for every arc \(I\) in \(\mathbb S^1\) and every \(k\geq3\). Moreover, each \(E_k\) is a finite union of pairwise disjoint closed arcs in \(\mathbb S^1\).
\end{proposition}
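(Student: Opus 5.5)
We build $E_k$ as a sparse, nearly periodic union of very short arcs. The periods are chosen so small that the trivial bound $|E_k\cap I|\le |I|$ already suffices on arcs shorter than one period. Arcs longer than a period meet only a proportional number of periods. To keep the sets disjoint, the periods are nested and each $E_k$ avoids the arcs already used.

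\emph{Construction.} For convenience we parametrize $\mathbb S^1$ by $[0,1)$ with normalized length; the factor $2\pi$ only changes constants. Choose integers $N_3<N_4<\cdots$ satisfying three conditions:
\begin{itemize}[nosep]
\item[(a)] $N_{k-1}\mid N_k$;
\item[(b)] $N_k\,m_j/N_j\in\mathbb N$ for all $j<k$;
\item[(c)] $\log N_k\ge e^{k}$.
\end{itemize}
Such integers exist by taking each $N_k$ a huge multiple of $1000^2 k^2 N_{k-1}\prod_{j<k}j^2$. Put
\[
U_k=\{x:\ \{N_kx\}\in[0,2m_k]\}.
\]
This is a union of $N_k$ arcs of length $2m_k/N_k$, one at the left end of each level-$k$ period $[i/N_k,(i+1)/N_k)$. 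By (a) and (b), every $U_j$ with $j<k$ is a union of whole level-$k$ periods.

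Let $\mathcal P_k$ be the set of level-$k$ periods not contained in $\bigcup_{j<k}U_j$. Since $\sum_{j}2m_j\le \frac{2}{1000}\sum_j j^{-2}<\frac12$, the set $\mathcal P_k$ contains at least $N_k/2$ periods. In each period of $\mathcal P_k$, place a closed arc at its left end, of a common length $\ell_k\le 2m_k/N_k$ chosen so that the total measure is exactly $m_k$. Call the union $E_k$. Then $E_k\subset U_k\setminus\bigcup_{j<k}U_j$, so the $E_k$ are pairwise disjoint, and each $E_k$ is a finite union of disjoint closed arcs with $|E_k|=m_k$.

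\emph{Density estimate.} Fix an arc $I$; we split into two cases according to the size of $|I|$.
\begin{itemize}
\item If $|I|\ge 1/N_k$, then $I$ meets at most $N_k|I|+2\le 3N_k|I|$ level-$k$ periods. Each period carries at most $2m_k/N_k$ of $E_k$, so $|E_k\cap I|\le 6m_k|I|$. This already gives the claim with any $C_\eta\ge 6$.
\item If $|I|<1/N_k$, we use $|E_k\cap I|\le|I|$. By (c), $\log(1/|I|)\ge \log N_k\ge e^{k}$, hence
\[
m_k\left(1+\log\tfrac1{|I|}\right)^\eta\ \ge\ \frac{e^{\eta k}}{1000k^2}\ \ge\ c_\eta>0
\]
uniformly in $k$. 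Therefore $|I|\le c_\eta^{-1}|E_k|\,|I|\,(1+\log\frac1{|I|})^\eta$.
\end{itemize}
Taking $C_\eta=\max(6,c_\eta^{-1})$ gives the bound, independent of $k$ and $I$.

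The main obstacle is making the sets disjoint without destroying their uniform spread. The divisibility conditions (a) and (b) handle this: earlier sets occupy whole level-$k$ periods, so removing them only deletes a fraction less than $\tfrac12$ of the periods and costs a factor $2$ in the counting. The other delicate point is the small-arc regime. There the double-exponential growth (c) is exactly what lets the arbitrarily small power $(\log 1/|I|)^\eta$ absorb the factor $1/m_k\sim k^2$, uniformly in $k$.
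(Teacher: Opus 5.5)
Your proof is correct. Its core mechanism matches the paper's: $E_k$ is split into roughly $N_k$ tiny arcs spread uniformly at an extremely fine scale. Arcs $I$ above that scale meet only $O(N_k|I|)$ pieces, so $|E_k\cap I|\lesssim m_k|I|$. Arcs below it are handled by $|E_k\cap I|\le|I|$, and the factor $1/m_k$ is absorbed by the logarithm because the scale is double-exponentially small.

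The execution differs. The paper argues measure-theoretically:
\begin{itemize}
\item inner regularity gives a compact $K_k$ with $|K_k|>\tfrac12$ at positive distance from $F_{k-1}=\bigcup_{j<k}E_j$;
\item it takes a maximal $\delta_k$-separated set in $K_k$ and puts arcs of radius $\pi m_k/N_k$ around its points;
\item it counts the pieces meeting $I$ by packing the disjoint arcs $A(t_{k,j},\delta_k/4)$ into a neighbourhood of $I$;
\item its threshold $|I|<e^{-m_k^{-k}}$ yields only the exponent $1/k$, so the finitely many $k<K_\eta$ must be treated separately with the constant $1/m_k$.
\end{itemize}
Your nested periodic grid turns all of this into arithmetic. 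The divisibility conditions make the earlier sets unions of whole level-$k$ periods, so disjointness and counting are immediate. The condition $\log N_k\ge e^k$ makes the small-arc regime uniform in $k$ in one step, since $\inf_{k\ge3}e^{\eta k}/(1000k^2)>0$. The price is that you use the arithmetic structure of the circle. Separated nets and caps exist on every $\mathbb S^{n-1}$, and the paper's remark that $n>2$ follows by suitable modifications relies on this; your version would need a nested partition of the sphere into comparable cells.

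One detail should be fixed: the inclusion $E_k\subset U_k\setminus\bigcup_{j<k}U_j$ fails at isolated points. The $U_j$ are unions of closed arcs. The right endpoint of a $U_j$-arc is the left endpoint of a level-$k$ period that may belong to $\mathcal P_k$, and your $E_k$-arc starts exactly at that point. Disjointness still holds, but the reason is the strict inequality $\ell_j<2m_j/N_j$. This is true because $\sum_j2m_j<\tfrac12$ forces $|\mathcal P_j|>N_j/2$, and it ensures that $E_j$ never contains the right endpoint of a $U_j$-arc. With only $\ell_j\le2m_j/N_j$, as you wrote, the argument would not exclude a common point.
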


	Now, we present the proof of Theorem \ref{thm:C}.

    \begin{proof}[\textbf{Proof of Theorem C}]
    Let \((E_k)_{k\geq3}\) be as in Proposition \ref{prop:prop 3.1} and consider the function
	\[
	\Omega(y')
	:=
	\sum_{k=3}^{\infty}\gamma_k\chi_{E_k}(y'),
	\]
	where
	\[
	\gamma_k
	:=
	k\exp\left(-\sqrt{\log k}\right).
	\]
	
	We first prove that
	\[
	\Omega\in
	\bigcap_{\beta>0}L(\log L)^\beta(\mathbb S^1).
	\]
	Fix \(\beta>0\). Since the sets \(E_k\) are pairwise disjoint and \(\gamma_k\leq k\), we have
	\begin{align*}
		\int_{\mathbb S^1}
		|\Omega|
		\left(\log(e+|\Omega|)\right)^\beta
		\,d\sigma
		&=
		\sum_{k=3}^{\infty}
		\gamma_km_k
		\left(\log(e+\gamma_k)\right)^\beta\\
		&=
		\frac1{1000}
		\sum_{k=3}^{\infty}
		\frac{\exp(-\sqrt{\log k})}{k}
		\left(\log(e+\gamma_k)\right)^\beta\\
		&\leq
		\frac{2^\beta}{1000}
		\sum_{k=3}^{\infty}
		\frac{\exp(-\sqrt{\log k})}{k}
		(\log k)^\beta.
	\end{align*}
	The last series is convergent. In fact, by the integral test and the change of variables
	\(
	u=\sqrt{\log x},
	\)
	we have
	\begin{align*}
		\int_3^\infty
		\frac{\exp(-\sqrt{\log x})}{x}
		(\log x)^\beta\,dx
		&=
		2\int_{\sqrt{\log3}}^\infty
		e^{-u}u^{2\beta+1}\,du\\
		&<\infty.
	\end{align*}
	Therefore,
	\[
	\Omega\in L(\log L)^\beta(\mathbb S^1)
	\]
	for every \(\beta>0\).
	
	We next prove that
	\[
	\Omega\notin B_q^{0,v}(\mathbb S^1)
	\]
	for every \(v>-1\) and every \(1<q<\infty\).
	
	Fix \(v>-1\) and \(1<q<\infty\). Suppose, to the contrary, that
	\[
	\Omega\in B_q^{0,v}(\mathbb S^1).
	\]
	Let
	\[
	\Omega=\sum_{j=1}^{\infty}\mu_j a_j
	\]
	be an admissible \(q\)-block decomposition, where \(a_j\) is supported in an arc \(I_j\), and
	\[
	\sum_{j=1}^{\infty}|\mu_j|
	\left\{
	1+\frac{1}{1+v}
	\left(\log^+\frac1{|I_j|}\right)^{1+v}
	\right\}
	<\infty.
	\]
	
	For \(N\geq3\), let
	\[
	S_N
	=
	\sum_{k=3}^{N}
	\gamma_k^{q-1}\chi_{E_k}
	\]
	and
	\[
	A_N
	=
	\sum_{k=3}^{N}\gamma_k^qm_k.
	\]
	
	Set
	\[
	\eta=(1+v)q'>0.
	\]
	By Proposition \ref{prop:prop 3.1}, there exists a constant \(C_\eta>0\) such that
	\[
	|E_k\cap I|
	\leq
	C_\eta m_k|I|
	\left(1+\log\frac1{|I|}\right)^{(1+v)q'}
	\]
	for every \(k\geq3\) and every arc \(I\) in \(\mathbb S^1\).
	
	Since the sets \(E_k\) are pairwise disjoint, we have
	\begin{align*}
		\|S_N\|_{L^{q'}(I_j)}^{q'}
		&=
		\sum_{k=3}^{N}
		\gamma_k^{(q-1)q'}
		|E_k\cap I_j|\\
		&=
		\sum_{k=3}^{N}
		\gamma_k^q
		|E_k\cap I_j|\\
		&\leq
		C_\eta |I_j|
		\left(1+\log\frac1{|I_j|}\right)^{(1+v)q'}
		A_N.
	\end{align*}
	Consequently,
	\[
	\|S_N\|_{L^{q'}(I_j)}
	\leq
	C_\eta^{1/q'}
	|I_j|^{1/q'}
	\left(1+\log\frac1{|I_j|}\right)^{1+v}
	A_N^{1/q'}.
	\]
	
	Therefore, by Hölder's inequality,
	\begin{align}\label{eq:equation 1.1}
		\left|
		\int_{\mathbb S^1}\Omega S_N\,d\sigma
		\right|
		&\leq
		\sum_{j=1}^{\infty}
		|\mu_j|
		\int_{I_j}|a_j||S_N|\,d\sigma
		\nonumber\\
		&\leq
		\sum_{j=1}^{\infty}
		|\mu_j|
		\|a_j\|_q
		\|S_N\|_{L^{q'}(I_j)}
		\nonumber\\
		&\leq
		C_\eta^{1/q'}
		A_N^{1/q'}
		\sum_{j=1}^{\infty}
		|\mu_j|
		\left(1+\log\frac1{|I_j|}\right)^{1+v}.
	\end{align}
	
	Since \(1+v>0\), there exists a constant \(C_v>0\) such that
	\[
	(1+t)^{1+v}
	\leq
	C_v
	\left\{
	1+\frac{1}{1+v}t^{1+v}
	\right\},
	\qquad t\geq0.
	\]
	Thus, equation \eqref{eq:equation 1.1} gives
	\begin{align}\label{eq:equation 1.2}
		\left|
		\int_{\mathbb S^1}\Omega S_N\,d\sigma
		\right|
		&\leq
		C_{q,v}
		A_N^{1/q'}
		\sum_{j=1}^{\infty}|\mu_j|
		\left\{
		1+\frac{1}{1+v}
		\left(\log^+\frac1{|I_j|}\right)^{1+v}
		\right\}.
	\end{align}
	
	On the other hand, since the sets \(E_k\) are pairwise disjoint, we have
	\begin{align}\label{eq:equation 1.3}
		\int_{\mathbb S^1}\Omega S_N\,d\sigma
		&=
		\sum_{k=3}^{N}\gamma_k^qm_k
		\nonumber\\
		&=
		A_N.
	\end{align}
	
	Combining equations \eqref{eq:equation 1.2} and \eqref{eq:equation 1.3}, we obtain
	\[
	A_N^{1/q}
	\leq
	C_{q,v}
	\sum_{j=1}^{\infty}|\mu_j|
	\left\{
	1+\frac{1}{1+v}
	\left(\log^+\frac1{|I_j|}\right)^{1+v}
	\right\}.
	\]
	The right-hand side is finite and independent of \(N\).
	
	However,
	\begin{align*}
		A_N
		&=
		\frac1{1000}
		\sum_{k=3}^{N}
		k^{q-2}
		\exp\left(-q\sqrt{\log k}\right).
	\end{align*}
	For all sufficiently large \(k\), we have
	\[
	\frac{q}{\sqrt{\log k}}
	\leq
	\frac{q-1}{2}.
	\]
	Therefore,
	\[
	\exp\left(-q\sqrt{\log k}\right)
	=
	k^{-q/\sqrt{\log k}}
	\geq
	k^{-(q-1)/2}.
	\]
	It follows that
	\[
	k^{q-2}
	\exp\left(-q\sqrt{\log k}\right)
	\geq
	k^{(q-3)/2}
	\]
	for all sufficiently large \(k\). Since
	\[
	\frac{q-3}{2}>-1
	\]
	for every \(q>1\), we conclude that
	\[
	\sum_{k=3}^{\infty}
	k^{q-2}
	\exp\left(-q\sqrt{\log k}\right)
	=
	\infty.
	\]
	Thus,
	\[
	A_N\longrightarrow\infty,
	\]
	which is a contradiction.
	
	Therefore,
	\[
	\Omega\notin B_q^{0,v}(\mathbb S^1),
	\qquad
	v>-1,\quad 1<q<\infty.
	\]
\end{proof}

Following the same approach as above, we now prove Theorem D.
\begin{proof}[\textbf{Proof of Theorem \ref{thm:D}}]
	Firstly,  we assume that $q<\infty$. Since the inclusion
	\[
	B_q^{0,\nu}(\mathbb S^1)
	\subseteq
	B_p^{0,\nu}(\mathbb S^1)
	\]
	has already been established, it remains only to prove that it is proper.
	
	Let \((E_k)_{k\geq3}\) be the sequence given by Proposition
	\ref{prop:prop 3.1} and 
	\(
	\Omega
	=
	\sum_{k=3}^{\infty}
	k^{1/q}\chi_{E_k}.
	\)
	Then
	\[
	\|\Omega\|_p^p
	=
	\sum_{k=3}^{\infty}k^{p/q}|E_k|
	=
	\frac{1}{1000}
	\sum_{k=3}^{\infty}k^{p/q-2},
	\]
	and therefore
	\(
	\Omega\in 	L^p(\mathbb S^1)
	\subseteq
	B_p^{0,\nu}(\mathbb S^1).
	\)
	We claim that
	\(
	\Omega\notin B_q^{0,\nu}(\mathbb S^1).
	\)
	Suppose, to the contrary, that
	\[
	\Omega
	=
	\sum_{j=1}^{\infty}\mu_j a_j
	\]
	is an admissible \(q\)-block decomposition, where \(a_j\) is supported in
	an arc \(I_j\).
	For \(N\geq3\), set
	\[
	S_N
	=
	\sum_{k=3}^{N}
	k^{\frac{1}{q'}}\chi_{E_k}
	\]
	and
	\[
	A_N
	=
	\sum_{k=3}^{N}k|E_k|
	=
	\frac{1}{1000}
	\sum_{k=3}^{N}\frac1k.
	\]
	Let
	\(
	\eta=(1+\nu)q'.
	\)
	By Proposition \ref{prop:prop 3.1},
	\[
	|E_k\cap I|
	\leq
	C_\eta |E_k|\,|I|
	\left(1+\log\frac1{|I|}\right)^{(1+\nu)q'}
	\]
	for every arc \(I\). Thus, we have
	\begin{align*}
		\|S_N\|_{L^{q'}(I_j)}^{q'}
		&=
		\sum_{k=3}^{N}
		k\,|E_k\cap I_j|\\
		&\leq
		C_\eta |I_j|
		\left(1+\log\frac1{|I_j|}\right)^{(1+\nu)q'}
		A_N.
	\end{align*}
	Therefore,
	\[
	\|S_N\|_{L^{q'}(I_j)}
	\leq
	C_\eta^{1/q'}
	|I_j|^{1/q'}
	\left(1+\log\frac1{|I_j|}\right)^{1+\nu}
	A_N^{1/q'}.
	\]
	
	By Hölder's inequality and the \(q\)-block condition,
	\begin{align*}
		\int_{\mathbb S^1}\Omega S_N\,d\sigma
		&\leq
		\sum_{j=1}^{\infty}
		|\mu_j|
		\|a_j\|_q
		\|S_N\|_{L^{q'}(I_j)}\\
		&\leq
		C_{q,\nu}\,A_N^{1/q'}.
	\end{align*}
	On the other hand, we have
	\[
	\int_{\mathbb S^1}\Omega S_N\,d\sigma
	=
	\sum_{k=3}^{N}k|E_k|
	=
	A_N.
	\]
	Consequently,
	\[
	A_N^{1/q}
	\leq
	C_{q,\nu}
	\]
	for every \(N\).
	
	This is impossible, since
	\[
	A_N
	=
	\frac{1}{1000}
	\sum_{k=3}^{N}\frac1k
	\longrightarrow\infty.
	\]
	Thus,
	\[
	\Omega\notin B_q^{0,\nu}(\mathbb S^1).
	\]
	Therefore,
	\[
	\Omega\in
	B_p^{0,\nu}(\mathbb S^1)
	\setminus
	B_q^{0,\nu}(\mathbb S^1),
	\]
	and the inclusion is proper. Finally, for $q=\infty$, if $B_\infty^{0,v} = B_p^{0,v}$, we would get \[B_\infty^{0,v} \subseteq B_{2p}^{0,v}
	\subsetneqq B_p^{0,v} = B_\infty^{0,v},\] which is a contradiction. This concludes the proof of Theorem \ref{thm:D}.
\end{proof}

\section{Proof of Proposition \ref{prop:prop 3.1}}\label{sec:section4}

For \(e^{is},e^{it}\in\mathbb S^1=\{e^{iu}:0\leq u<2\pi\}\), the angular distance is defined by
\[
d(e^{is},e^{it})
:=
\min\{|s-t|,\,2\pi-|s-t|\}.
\]

For \(a\in[0,2\pi)\) and \(0<\rho<\pi\), the open arc centered at \(e^{ia}\) with angular radius \(\rho\) is defined by
\[
A(a,\rho)
:=
\{e^{it}\in\mathbb S^1:d(e^{it},e^{ia})<\rho\}.
\]
Notice that
\[
|A(a,\rho)|
\coloneqq
\sigma(A(a,\rho))
=
\frac{2\rho}{2\pi}
=
\frac{\rho}{\pi}.
\]

Let \(I\subset\mathbb S^1\) be an arc with
\[
0<s\coloneqq|I|\leq1.
\]
For \(\epsilon>0\), let
\[
I^{(\epsilon)}
:=
\{z\in\mathbb S^1:d(z,I)<\epsilon\}.
\]
Clearly, if
\[
0<\epsilon\leq\pi s,
\]
then
\[
|I^{(\epsilon)}|\leq2s.
\]

The proof of Proposition \ref{prop:prop 3.1} is a consequence of the following lemmas.
\begin{lemma}\label{lem:lemma 2.1}
	For 
	\(
	m_k:=\frac{1}{1000 k^2},
	\)
	there exists a sequence \((E_k)_{k\ge 3}\) of pairwise disjoint sets with \(|E_k|=m_k\) such that each \(E_k\) is a finite union of disjoint closed arcs in the unit circle \(\mathbb{S}^1\).
\end{lemma}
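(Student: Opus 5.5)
The statement is elementary, so the plan is to construct the sets explicitly and only check two things: the total measure fits in $\mathbb S^1$, and the sets can be placed disjointly. First I would compute the total mass,
\[
\sum_{k=3}^{\infty} m_k=\frac{1}{1000}\sum_{k=3}^{\infty}\frac{1}{k^2}<\frac{1}{1000}\cdot\frac{\pi^2}{6}<1 .
\]
This shows there is room on the circle, since $\sigma(\mathbb S^1)=1$ with the normalization fixed above, where $|A(a,\rho)|=\rho/\pi$.

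The simplest construction is then a single closed arc for each $k$, laid end to end with small gaps. Parametrize $\mathbb S^1$ by $t\in[0,2\pi)$, so that an arc $\{e^{it}:\alpha\le t\le\beta\}$ has measure $(\beta-\alpha)/(2\pi)$. Set $t_3=0$. Given $t_k$, define
\[
E_k=\{e^{it}: t_k\le t\le t_k+2\pi m_k\}
\]
and then $t_{k+1}=t_k+2\pi m_k+2\pi g_k$, where $g_k=\frac{1}{1000k^2}$ is a gap. The gaps ensure the closed arcs are pairwise disjoint. The total angular length used is
\[
2\pi\sum_{k\ge 3}(m_k+g_k)\le 2\pi\cdot\frac{2}{1000}\cdot\frac{\pi^2}{6}<2\pi,
\]
so every $t_k$ stays in $[0,2\pi)$ and there is no wrap-around. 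This also gives $|E_k|=m_k$ exactly, and each $E_k$ is trivially a finite union (one arc) of disjoint closed arcs.

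There is no real obstacle in this lemma; the only care needed is keeping the sets closed and disjoint at the same time, which the gaps $g_k$ handle. However, single arcs cannot satisfy the density estimate in Proposition \ref{prop:prop 3.1}. Taking $I=E_k$ would give $|E_k\cap I|=m_k$, while the right side there is only about $m_k^2\log^\eta$. So the real difficulty lies in the next step, and I would expect the paper to refine this construction. The refinement would split each $E_k$ into many tiny closed arcs spread evenly over the circle, say $N_k$ arcs of equal length $m_k/N_k$ centered near the points $e^{2\pi i(\ell+\theta_k)/N_k}$, with distinct offsets $\theta_k$ (or slightly perturbed lengths) to keep the families disjoint. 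For this lemma itself, though, either version works, and I would present the single-arc construction and keep the distributed one as the starting point for the equidistribution lemma that follows.
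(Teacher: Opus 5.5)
Your construction is correct and proves the lemma exactly as stated. With $\sigma(\mathbb S^1)=1$, each closed arc $E_k$ has measure $m_k$, and the gaps $g_k$ separate consecutive arcs. Since $2\pi\sum_{k\ge3}(m_k+g_k)<2\pi$, there is no wrap-around, so the $E_k$ are pairwise disjoint.

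The paper's proof is a genuinely different and much heavier construction. Apart from $E_3$, which it takes to be a single arc as you do, it proceeds as follows for each $k$. It picks a compact $K_k\subset\mathbb S^1\setminus F_{k-1}$ with $|K_k|>\frac12$. It fixes a scale $\delta_k<\min\{\frac{\pi}{100}e^{-m_k^{-k}},\rho_k^*/100\}$, where $\rho_k^*=\operatorname{dist}(K_k,F_{k-1})$. It then takes a maximal $\delta_k$-separated set of $N_k>\pi/(2\delta_k)$ points in $K_k$. Finally, $E_k$ is the union of the closed arcs $Q_{k,j}$ of measure $m_k/N_k$ centered at these points.

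For the bare statement this is overkill. In the paper, however, this lemma is really the construction step for Proposition~\ref{prop:prop 3.1}. Lemma~\ref{lem:lemma 2.2} uses the $Q_{k,j}$, $N_k$, $\delta_k$ ``as in Lemma~\ref{lem:lemma 2.1}'' to obtain $\operatorname{Card}\mathcal J\le 16N_k|I|$. So your route buys brevity, while the paper's buys the equidistribution needed later. As you correctly observe, your single arcs could not be used downstream.

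One caution about your sketched refinement. Proposition~\ref{prop:prop 3.1} forces $N_k\to\infty$: take $I$ to be one component of $E_k$, and you need $(1+\log(N_k/m_k))^{\eta}\ge 1/(C_\eta m_k)$. So eventually the spacing $1/N_k$ of an equally spaced grid is shorter than a component of $E_3$. Every rotation of that grid then places a center inside $E_3$, and distinct offsets or perturbed lengths cannot prevent this. You must discard the centers near $F_{k-1}$ while still keeping a fixed proportion of them. That is exactly what $K_k$, the positive distance $\rho_k^*$, and the maximal separated net achieve in the paper.
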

\begin{proof}
	Let
	\[
	F_{k-1} := \bigcup_{j=3}^{k-1} E_j.
	\]
	
	We proceed by induction on $k \ge 3$. Firstly, we construct $E_3$. Since $F_2 = \varnothing$, we choose $E_3$ to be any single closed arc on the circle with  length of $m_3$. Suppose now that for a given $k > 3$, we have constructed the sets $E_3, \dots, E_{k-1}$, each of which  is a finite union of disjoint closed arcs,  pairwise
	disjoint, and
	\[
	|E_j|=m_j
	\qquad\text{for every }3\le j\le k-1.
	\]
	
	The set \(F_{k-1}\) is compact and
	\[
	|F_{k-1}|
	=
	\sum_{j=3}^{k-1}|E_j|
	=
	\sum_{j=3}^{k-1}m_j
	\le
	\sum_{j=3}^{\infty}m_j
	<
	\frac14.
	\]
	Since \(\sigma\) is a regular Borel measure on the compact metric space
	\((\mathbb{S}^1,d)\), there exists a compact set
	\(
	K_k\subset \mathbb{S}^1\setminus F_{k-1}
	\)
	such that
	\(
	|K_k|>\frac12.
	\)
	Let
	\[
	\rho_k^*
	:=
	\operatorname{dist}(K_k,F_{k-1})
	:=
	\min\{d(z,w):z\in K_k,\ w\in F_{k-1}\}.
	\]
	Then \(\rho_k^*>0\). Let \(\delta_k<\min\left\{e^{-m_k^{-k}}\cdot \frac{\pi}{100},\frac{\rho_k^*}{100}\right\}\) and
	\(
	\{e^{it_{k,1}},\ldots,e^{it_{k,N_k}}\}\subset K_k
	\)
	be a maximal \(\delta_k\)-separated subset of \(K_k\).
	
	Since the arcs \(A(t_{k,j},\delta_k)\)  cover \(K_k\) we have
	
	\[
	\frac12
	<
	|K_k|
	\le
	\sum_{j=1}^{N_k}|A(t_{k,j},\delta_k)|
	=
	N_k\frac{\delta_k}{\pi}.
	\]
	Consequently,
	\[
	N_k>\frac{\pi}{2\delta_k}.
	\]
	
	We now define \(E_k\).
	For \(1\le j\le N_k\), let
	\[
	Q_{k,j}
	:=
	\{z\in \mathbb{S}^1:d(z,e^{it_{k,j}})\le \frac{\pi m_k}{N_k}\}.
	\]
	Clearly, we have that 
	\(Q_{k,1},\ldots,Q_{k,N_k}\) are pairwise
	disjoint and  do not intersect  \(F_{k-1}\). 
	
	Define
	\[
	E_k:=\bigcup_{j=1}^{N_k}Q_{k,j}.
	\]
	Then \(E_k\) is a finite union of closed arcs. Since the arcs \(Q_{k,j}\) are
	pairwise disjoint, we have
	\[
	|E_k|
	=
	\sum_{j=1}^{N_k}|Q_{k,j}|
	=
	\sum_{j=1}^{N_k}\frac{m_k}{N_k}
	=
	m_k.
	\]
\end{proof}

\begin{lemma}\label{lem:lemma 2.2}
	Let \(I\) be an arc in \(\mathbb S^1\) and let
	\[
	Q_{k,j},
	\qquad j=1,\ldots,N_k,
	\]
	be as in Lemma \ref{lem:lemma 2.1}. If
	\[
	\mathcal J
	=
	\{j\in\{1,\ldots,N_k\}:Q_{k,j}\cap I\neq\varnothing\}
	\]
	and
	\[
	|I|
	\geq
	\exp\left(-m_k^{-k}\right),
	\]
	then
	\[
	\operatorname{Card}\mathcal J
	\leq
	16N_k|I|.
	\]
\end{lemma}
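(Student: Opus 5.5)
Since the centers $e^{it_{k,1}},\ldots,e^{it_{k,N_k}}$ form a $\delta_k$-separated set, the count of indices $j\in\mathcal J$ comes from a volume (packing) argument on the circle. If $Q_{k,j}\cap I\neq\varnothing$, then the center $e^{it_{k,j}}$ lies within distance $\pi m_k/N_k$ of $I$. Since $N_k>\pi/(2\delta_k)$, we have $\pi m_k/N_k< 2\delta_k m_k<\delta_k$. So every such center lies in the enlarged arc $I^{(\delta_k)}$. Around each of these centers I place the open arc $A(t_{k,j},\delta_k/2)$. By $\delta_k$-separation these arcs are pairwise disjoint, and each is contained in $I^{(3\delta_k/2)}\subseteq I^{(2\delta_k)}$. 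Summing measures gives
\[
\operatorname{Card}\mathcal J\cdot\frac{\delta_k}{2\pi}\le |I^{(2\delta_k)}|.
\]

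Next I would bound $|I^{(2\delta_k)}|$ using the hypothesis $|I|\ge \exp(-m_k^{-k})$ together with the choice $\delta_k<\frac{\pi}{100}e^{-m_k^{-k}}$. These give $2\delta_k\le \pi|I|$. There are two cases.
\begin{itemize}[leftmargin=*]
\item If $|I|\le 1$, the remark before the lemma applies (with $s=|I|$ and $\epsilon=2\delta_k\le\pi s$) and gives $|I^{(2\delta_k)}|\le 2|I|$.
\item If $|I|>1$, which is impossible for the normalized measure except in degenerate cases, the trivial bound $|I^{(2\delta_k)}|\le1\le|I|$ suffices.
\end{itemize}
In both cases $|I^{(2\delta_k)}|\le 2|I|$. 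Hence
\[
\operatorname{Card}\mathcal J\le \frac{4\pi|I|}{\delta_k}.
\]

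Finally I convert $1/\delta_k$ into $N_k$. A maximal $\delta_k$-separated set also has the disjoint arcs $A(t_{k,j},\delta_k/2)$, which gives only the upper bound $N_k\delta_k/(2\pi)\le 1$, i.e.\ $N_k\le 2\pi/\delta_k$. That is the wrong direction. The direction I need is the lower bound $N_k>\pi/(2\delta_k)$ from the construction in Lemma~\ref{lem:lemma 2.1}, i.e.\ $1/\delta_k<2N_k/\pi$. Substituting,
\[
\operatorname{Card}\mathcal J\le 4\pi|I|\cdot\frac{2N_k}{\pi}=8N_k|I|\le 16N_k|I|.
\]
This leaves a factor-two margin to absorb any slack in the enlargement step, for instance if one uses $I^{(2\delta_k)}$ with a cruder measure estimate.

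The main obstacle is the enlargement step: checking carefully that $\pi m_k/N_k\le\delta_k$ and that $2\delta_k\le\pi|I|$ follows from $|I|\ge e^{-m_k^{-k}}$. The lower bound on $|I|$ is exactly what makes the $\delta_k$-neighborhood of $I$ comparable to $I$ itself. Without it, very short arcs could meet one $Q_{k,j}$ while $N_k|I|$ is tiny. The rest is the standard disjoint-arcs counting argument.
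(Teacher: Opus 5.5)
Your proof is correct and follows the paper's packing argument. You place disjoint arcs around the $\delta_k$-separated centers, note that they lie in a slight enlargement of $I$ whose measure is at most $2|I|$ because $|I|\ge \exp(-m_k^{-k})$ and $\delta_k<\frac{\pi}{100}\exp(-m_k^{-k})$, and then convert $1/\delta_k$ to $N_k$ using $N_k>\pi/(2\delta_k)$. The only difference is that you use arcs of radius $\delta_k/2$ where the paper uses $\delta_k/4$, which gives the slightly sharper bound $8N_k|I|$.
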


\begin{proof}
	If \(j\in\mathcal J\), then
	\[
	d(e^{it_{k,j}},I)
	\leq
	\frac{\pi m_k}{N_k}.
	\]
	On the other hand, the open arcs
	\[
	A\left(t_{k,j},\frac{\delta_k}{4}\right),
	\qquad j\in\mathcal J,
	\]
	are pairwise disjoint and
	\[
	A\left(t_{k,j},\frac{\delta_k}{4}\right)
	\subset
	I^{\left(
		\frac{\pi m_k}{N_k}+\frac{\delta_k}{4}
		\right)}.
	\]
	
	Since
	\[
	|I|
	\geq
	\exp\left(-m_k^{-k}\right)
	\]
	and
	\[
	\delta_k
	<
	\frac{\pi}{100}
	\exp\left(-m_k^{-k}\right),
	\]
	we have
	\[
	\frac{\pi m_k}{N_k}
	+\frac{\delta_k}{4}
	<
	2m_k\delta_k+\frac{\delta_k}{4}
	<
	\delta_k
	<
	\pi|I|.
	\]
	Therefore,
	\[
	\left|
	I^{\left(
		\frac{\pi m_k}{N_k}+\frac{\delta_k}{4}
		\right)}
	\right|
	\leq
	2|I|.
	\]
	
	It follows that
	\begin{align*}
		\frac{\delta_k}{4\pi}
		\operatorname{Card}\mathcal J
		&=
		\sum_{j\in\mathcal J}
		\left|
		A\left(t_{k,j},\frac{\delta_k}{4}\right)
		\right|\\
		&\leq
		2|I|.
	\end{align*}
	Hence,
	\[
	\operatorname{Card}\mathcal J
	\leq
	\frac{8\pi}{\delta_k}|I|.
	\]
	Since
	\[
	N_k>\frac{\pi}{2\delta_k},
	\]
	we conclude that
	\[
	\operatorname{Card}\mathcal J
	\leq
	16N_k|I|.
	\]
\end{proof}

\begin{lemma}\label{lem:lemma 2.3}
	Under the same assumptions of Lemmas \ref{lem:lemma 2.1} and \ref{lem:lemma 2.2}, we have
	\[
	|E_k\cap I|
	\leq
	16m_k|I|
	\left(1+\log\frac1{|I|}\right)^{\frac{1}{k}}
	\]
	for every \(k\geq3\) and every arc \(I\) in \(\mathbb S^1\).
\end{lemma}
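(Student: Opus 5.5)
We split into two cases according to whether $|I|$ is large or small relative to the threshold $\exp(-m_k^{-k})$ in Lemma \ref{lem:lemma 2.2}.

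\emph{Case 1: $|I|\ge \exp(-m_k^{-k})$.} Each piece $Q_{k,j}$ has measure $m_k/N_k$, so
\[
|E_k\cap I|\le \sum_{j\in\mathcal J}|Q_{k,j}|=\frac{m_k}{N_k}\operatorname{Card}\mathcal J .
\]
Lemma \ref{lem:lemma 2.2} gives $\operatorname{Card}\mathcal J\le 16N_k|I|$. Hence
\[
|E_k\cap I|\le 16m_k|I|\le 16m_k|I|\left(1+\log\tfrac1{|I|}\right)^{1/k},
\]
since the logarithmic factor is at least $1$ whenever $|I|\le1$. If $|I|>1$, we simply use $|E_k\cap I|\le |E_k|=m_k<m_k|I|$. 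Here we should make sure that $1+\log(1/|I|)$ is read with $\log^+$ for large arcs, or note that $|I|\le 1$ always holds for arcs under the normalized measure.

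\emph{Case 2: $|I|<\exp(-m_k^{-k})$.} In this case $\log(1/|I|)>m_k^{-k}$, so
\[
\left(1+\log\tfrac1{|I|}\right)^{1/k}> m_k^{-1}.
\]
The right-hand side of the claim therefore exceeds $16|I|$, and it suffices to use the trivial bound $|E_k\cap I|\le |I|$. This case is where the choice $\delta_k<\frac{\pi}{100}e^{-m_k^{-k}}$ in Lemma \ref{lem:lemma 2.1} matters: below the threshold the trivial bound is absorbed by the logarithm.

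The main obstacle is only bookkeeping. We must check that Case 1 indeed uses only the counting bound of Lemma \ref{lem:lemma 2.2}, together with the exact measure $m_k/N_k$ of each $Q_{k,j}$ (its angular radius $\pi m_k/N_k$ gives normalized measure $m_k/N_k$). We must also handle the degenerate case of a full-circle arc, $|I|=1$, which is covered by $|E_k\cap I|\le m_k$.

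Proposition \ref{prop:prop 3.1} then follows. For fixed $\eta>0$ and $k\ge 1/\eta$, we have $(1+\log(1/|I|))^{1/k}\le(1+\log(1/|I|))^{\eta}$. The finitely many $k<1/\eta$ are handled by the same Case 2 argument at level $k$, which yields a finite constant $C_\eta$.
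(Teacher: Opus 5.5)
Your argument is correct and is essentially the paper's proof. It uses the same split at the threshold $\exp(-m_k^{-k})$: above it, the counting bound of Lemma~\ref{lem:lemma 2.2} times the piece measure $m_k/N_k$; below it, the trivial bound $|E_k\cap I|\le|I|$, absorbed via $\bigl(1+\log\frac{1}{|I|}\bigr)^{1/k}>m_k^{-1}$. Two minor corrections to your commentary: the choice $\delta_k<\frac{\pi}{100}e^{-m_k^{-k}}$ is what makes Lemma~\ref{lem:lemma 2.2} available in the large-arc case, while the small-arc case uses only the threshold itself; and since $\sigma$ is normalized, every arc has $|I|\le 1$, so the $|I|>1$ discussion is unnecessary.
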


\begin{proof}
	Firstly, suppose that
	\[
	|I|
	<
	\exp\left(-m_k^{-k}\right).
	\]
	Then
	\[
	\log\frac1{|I|}
	>
	m_k^{-k},
	\]
	and consequently
	\[
	m_k
	\left(1+\log\frac1{|I|}\right)^{\frac{1}{k}}
	\geq1.
	\]
	Therefore,
	\[
	|E_k\cap I|
	\leq
	|I|
	\leq
	m_k|I|
	\left(1+\log\frac1{|I|}\right)^{\frac{1}{k}}.
	\]
	
	Now suppose that
	\[
	|I|
	\geq
	\exp\left(-m_k^{-k}\right).
	\]
	Let
	\[
	\mathcal J
	=
	\{j\in\{1,\ldots,N_k\}:Q_{k,j}\cap I\neq\varnothing\}.
	\]
	By Lemma \ref{lem:lemma 2.2}, we have
	\[
	\operatorname{Card}\mathcal J
	\leq
	16N_k|I|.
	\]
	Thus,
	\begin{align*}
		|E_k\cap I|
		&\leq
		\sum_{j\in\mathcal J}|Q_{k,j}|\\
		&=
		\frac{m_k}{N_k}
		\operatorname{Card}\mathcal J\\
		&\leq
		16m_k|I|\\
		&\leq
		16m_k|I|
		\left(1+\log\frac1{|I|}\right)^{\frac{1}{k}}.
	\end{align*}
\end{proof}

\begin{lemma}\label{lem:lemma 2.4}
	For every \(\eta>0\), there exists a constant \(C_\eta>0\), independent of \(k\) and \(I\), such that
	\[
	|E_k\cap I|
	\leq
	C_\eta m_k|I|
	\left(1+\log\frac1{|I|}\right)^\eta
	\]
	for every \(k\geq3\) and every arc \(I\) in \(\mathbb S^1\).
\end{lemma}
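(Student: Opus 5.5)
The plan is to derive Lemma \ref{lem:lemma 2.4} directly from Lemma \ref{lem:lemma 2.3}, splitting the indices $k$ according to the size of $1/k$ compared with $\eta$. Fix $\eta>0$ and an arc $I$. We may take $|I|\leq 1$, since $|\mathbb S^1|=1$ under the normalized measure. Then $1+\log\frac{1}{|I|}\geq 1$, and the map $\theta\mapsto\left(1+\log\frac1{|I|}\right)^\theta$ is nondecreasing in $\theta\ge0$. This monotonicity is the only real ingredient.

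\emph{Case 1: $k\geq 1/\eta$.} Here $\frac1k\leq\eta$, so Lemma \ref{lem:lemma 2.3} gives
\[
|E_k\cap I|\leq 16m_k|I|\left(1+\log\frac1{|I|}\right)^{1/k}\leq 16m_k|I|\left(1+\log\frac1{|I|}\right)^{\eta}.
\]

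\emph{Case 2: $3\leq k<1/\eta$.} Only finitely many $k$ fall here, which happens only when $\eta<1/3$. For these $k$ we still have a uniform bound, but a cruder one is enough. Set $k_0=\lceil 1/\eta\rceil$. For $k<k_0$ one has $m_k\geq m_{k_0}$, which is harmless. The useful observation is that the argument of Lemma \ref{lem:lemma 2.3} in fact yields something stronger: for $|I|\geq \exp(-m_k^{-k})$ it gives $|E_k\cap I|\leq 16m_k|I|$ with no logarithmic factor at all. So only the regime of tiny arcs needs attention. There Lemma \ref{lem:lemma 2.3} uses $|E_k\cap I|\leq|I|$ together with $m_k(1+\log\frac1{|I|})^{1/k}\geq1$.

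When $|I|<\exp(-m_k^{-k})$ we have $1+\log\frac1{|I|}>m_k^{-k}$. The goal is to show $|I|\le C_\eta m_k|I|(1+\log\frac1{|I|})^{\eta}$, that is, $m_k(1+\log\frac1{|I|})^\eta\geq 1/C_\eta$. It suffices that $m_k\cdot m_k^{-k\eta}\geq 1/C_\eta$. Since $k$ ranges over the finite set $\{3,\dots,k_0-1\}$, the quantity $m_k^{1-k\eta}$ is bounded below by a positive constant depending only on $\eta$ (it is even $\ge1$ when $k\eta\geq1$). Hence we can take
\[
C_\eta=\max\Bigl\{16,\ \max_{3\le k<k_0} m_k^{k\eta-1}\Bigr\}.
\]

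Combining the two cases gives the claim with $C_\eta$ independent of $k$ and $I$. I do not expect a serious obstacle. The one point needing care is Case 2: one must re-enter the two regimes of the proof of Lemma \ref{lem:lemma 2.3}, rather than citing its statement, because the exponent $1/k$ exceeds $\eta$ there. Finiteness of the set of exceptional $k$ then absorbs everything into the constant.
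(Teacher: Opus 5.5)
Your proof is correct. Case 1 coincides with the paper's argument, but you treat the finitely many exceptional indices $3\le k<1/\eta$ differently.

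The paper does not revisit the structure of $E_k$ for those indices. It uses only the trivial bound $|E_k\cap I|\le |I|$ together with $1+\log\frac1{|I|}\ge 1$. It writes $|I|=\frac1{m_k}\,m_k|I|$ and absorbs $1/m_k\le 1000(K_\eta-1)^2$ into the constant, giving $C_\eta=\max\{16,1000(K_\eta-1)^2\}$.

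You instead split again into two regimes. For large arcs you use the log-free bound $16m_k|I|$ that comes from Lemma \ref{lem:lemma 2.2}; for tiny arcs you use $1+\log\frac1{|I|}>m_k^{-k}$. This gives the slightly smaller constant $m_k^{k\eta-1}=m_k^{-1}m_k^{k\eta}\le m_k^{-1}$, but it relies on the internals of the construction.

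For that reason, your closing remark that one \emph{must} re-enter the proof of Lemma \ref{lem:lemma 2.3} is not accurate. Only finitely many $k$ are exceptional, and $m_k$ is bounded below on that set; this is exactly the observation $m_k\ge m_{k_0}$ that you made and then set aside. So the trivial estimate already suffices, and either way $C_\eta$ is independent of $k$ and $I$.
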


\begin{proof}
	Fix \(\eta>0\). Then
	there exists \(K_\eta\) such that
	\[
	\frac{1}{k}\leq\eta,
	\qquad
	k\geq K_\eta.
	\]
	For \(k\geq K_\eta\), Lemma \ref{lem:lemma 2.3} gives
	\begin{align*}
		|E_k\cap I|
		&\leq
		16m_k|I|
		\left(1+\log\frac1{|I|}\right)^{\frac{1}{k}}\\
		&\leq
		16m_k|I|
		\left(1+\log\frac1{|I|}\right)^\eta.
	\end{align*}
	
	For \(3\leq k<K_\eta\), we use the trivial estimate
	\[
	|E_k\cap I|\leq|I|.
	\]
	Since
	\[
	1+\log\frac1{|I|}\geq1,
	\]
	we obtain
	\[
	|E_k\cap I|
	\leq
	\frac1{m_k}
	m_k|I|
	\left(1+\log\frac1{|I|}\right)^\eta.
	\]
	
	Therefore, the result follows by taking
	\[
	C_\eta
	=
	\max\left\{
	16,\,
	1000(K_\eta-1)^2
	\right\}.
	\]
\end{proof}

Proposition \ref{prop:prop 3.1} follows immediately from Lemmas \ref{lem:lemma 2.1} and \ref{lem:lemma 2.4}.


\begin{thebibliography}{99}
\bibitem{ALQALS-ASIAN} Al-Qassem, H. M., Al-Salman, A. J.: Rough
Marcinkiewicz integrals related to surfaces of revolution. Asian J. Math.
7(2), 219--230 (2003).

\bibitem{jmaap} Al-Qassem, H. M., Al-Salman, A. J.: A note on Marcinkiewicz
integral operators. J. Math. Anal. Appl. 282(2), 698--710 (2003).

\bibitem{ALQREV} Al-Qassem, H., Pan, Y. B.: {$L^p$} estimates for singular
integrals with kernels belonging to certain block spaces. Rev. Mat. Iberoam.
18(3), 701--730 (2002).

\bibitem{hokido} Al-Qassem, H. M., Al-Salman, A. J., Pan, Y. B.: Singular
integrals associated to homogeneous mappings with rough kernels. Hokkaido
Math. J. 33(3), 551--569 (2004).

\bibitem{ALHF} Al-Hasan, A. J., Fan, D.: A singular integral operator
related to block spaces. Hokkaido Math. J. 28(2), 285--299 (1999).

\bibitem{CZ1} Calder\'{o}n, A. P., Zygmund, A.: On the existence of certain
singular integrals. Acta Math. 88, 85--139 (1952).

\bibitem{CZ2} Calder\'{o}n, A. P., Zygmund, A.: On singular integrals. Amer.
J. Math. 78, 289--309 (1956).
\bibitem{Preprint} Chen, Y., Ji, Z., Wang, T., and Wu, H. On the
relationship between block spaces and Orlicz spaces. arXiv preprint
arXiv:2606.16216, 2026.

\bibitem{coifman} Coifman, R. R., Weiss, G.: Extensions of Hardy spaces and
their use in analysis. Bull. Amer. Math. Soc. 83(4), 569--645 (1977).

\bibitem{KS} Keitoku, M., Sato, E.: Block spaces on the unit sphere in {$
\mathbb{R}^{n}$}. Proc. Amer. Math. Soc. 119(2), 453--455 (1993).

\bibitem{LUNG-FAN} Chen, L. K., Fan, D. S.: On singular integrals along
surfaces related to block spaces. Integral Equations Operator Theory 29(3),
261--268 (1997).

\bibitem{LTW} Lu, S. Z., Taibleson, M. H., Weiss, G.: \emph{Spaces Generated
by Blocks}. Beijing Normal University Press, Beijing (1989).

\bibitem{BLOCK RELATION} Ye, X. F., Zhu, X. R.: A note on certain block
spaces on the unit sphere. Acta Math. Sin. (Engl. Ser.) 22(6), 1843--1846
(2006).

\bibitem{MTW} Meyer, Y., Taibleson, M. H., Weiss, G.: Some functional
analytic properties of the spaces {$B_q$} generated by blocks. Indiana Univ.
Math. J. 34(3), 493--515 (1985).

\bibitem{SO} Soria, F.: Characterizations of classes of functions generated
by blocks and associated Hardy spaces. Indiana Univ. Math. J. 34(3),
463--492 (1985).

\bibitem{TW} Taibleson, M. H., Weiss, G.: Certain function spaces associated
with a.e. convergence of Fourier series. In: Conference in Honor of A.
Zygmund, Vol. I, Wadsworth, pp. 95--113 (1983).
\end{thebibliography}
\end{document}